\documentclass[a4paper,10pt,sort&compress]{elsarticle}

\newif\ifusetikzexternalize
\usetikzexternalizetrue

\usepackage{amsmath,amsthm,amssymb,scalerel}
\usepackage{bm}
\usepackage{mathtools}

\usepackage{thmtools, thm-restate}

\newcommand\mtext[1]{\text{#1}}
\newcommand{\+}[1]{\boldsymbol{\ensuremath{\mathbf{#1}}}}
\newcommand\timpact{\varsigma}

\newcommand\symmdiff{\triangle}

\newcommand\roundpar[1]{\left( #1 \right)}
\newcommand\squarepar[1]{\left[ #1 \right]}
\newcommand\curlypar[1]{\left\lbrace #1 \right\rbrace}

\newcommand\oneDIntegral[4]{\int_{#1}^{#2} #3 \hspace{1ex} d#4}
\newcommand\integral[3]{\int_{#1} #2 \hspace{1ex} d#3}

\newcommand\defeq{\mathrel{\mathop:}=}

\newcommand\half{\frac{1}{2}}

\usepackage{stackengine}
\DeclareFontFamily{U}{mathx}{\hyphenchar\font45} 
\DeclareFontShape{U}{mathx}{m}{n}{
      <5> <6> <7> <8> <9> <10>
      <10.95> <12> <14.4> <17.28> <20.74> <24.88>
      mathx10
      }{}
\DeclareSymbolFont{mathx}{U}{mathx}{m}{n}
\DeclareMathAccent{\widecheck}    {0}{mathx}{"71}
\newcommand\approximate[1]{\widecheck{#1}}

\def\onefluid{one-velocity }
\def\twofluid{two-velocity }
\def\Onefluid{One-velocity }
\def\Twofluid{Two-velocity } 

\def\stagger{\widetilde}
\def\volfrac{\alpha}
\def\volfracstag{\stagger\alpha}
\def\orientation{o}

\def\normalstag{\stagger{\+\eta}}
\def\tangentstag{\stagger{\+\tau}}

\def\tvar{T}

\def\level{L}
\def\basemesh{h_0}

\def\onevelo{\bar{u}}

\def\volumeflux{\upsilon}
\def\volumefluxOne{\bar\volumeflux}

\def\volumefluxTwo{\widehat\volumeflux}
\def\massflux{m}
\def\massfluxOne{\bar\massflux}

\def\massfluxTwo{\widehat\massflux}
\def\dt{\delta t}
\def\dx{h}

\newcommand\amr[1]{(#1)}

\newcommand\jumpOper{\mathfrak{J}}

\def\gfmthing{{\Xi}}

\def\gravpot{G}
\def\gravpotc{G_c}

\newcommand\advectionstag[2]{\stagger{\mathfrak{A}}[#1]#2}
\newcommand\divh{\mathfrak{D}}
\newcommand\gradh{\mathfrak{S}}

\newcommand\gfmgrad{\mathfrak{g}}
\def\symmgrad{\stagger\gradh^S}

\newcommand\gradient{\nabla}
\newcommand\divergence{\gradient \cdot}
\newcommand\curl{\gradient \times}

\newcommand\abs[1]{\left\lvert #1 \right\rvert}
\newcommand\norm[1]{\lVert #1 \rVert}

\newcommand\waveamplitude{\delta}

\newcommand\laplacenr{\text{La}}
\newcommand\galilei{\text{Ga}}

\newcommand\reynolds{\text{Re}}

\newcommand\weber{\text{We}}
\newcommand\bond{\text{Bo}}


\newcommand\ratio[1]{\mathcal{R}_{#1}}
\newcommand\ratiofull[1]{{#1^g}/{#1^l}}

\newcommand\mathcfl{\nu}

\newcommand\posflux[1]{\max\roundpar{0, #1}}

\newcommand\wycflval{\nu^\dagger}

\usepackage{stmaryrd}

\newcommand\jump[1]{\left\llbracket#1\right\rrbracket}

\newcommand\sgrad[1]{\gfmgrad^{#1}(p,\xi)}
\newcommand\sgradf[1]{\gfmgrad^{#1}_f(p,\xi)}

\newcommand\sgradone{\bar{\gfmgrad}(p)}

\newcommand\stackrelwidth[2]{\stackrel{\mathmakebox[\widthof{#2}]{#1}}{#2}}

\newcommand\setextrusion[1]{{\omega(\mathcal{#1})}}

\def\apertnormal{\+\eta^a}


\newcommand\mean[1]{\dgal*{#1}}
\usepackage{xparse}
\NewDocumentCommand{\dgal}{sO{}m}{%
  \IfBooleanTF{#1}
    {\dgalext{#3}}
    {\dgalx[#2]{#3}}%
}

\NewDocumentCommand{\dgalext}{m}{%
  \sbox0{%
    \mathsurround=0pt 
    $\left\{\vphantom{#1}\right.\kern-\nulldelimiterspace$%
  }%
  \sbox2{\{}%
  \ifdim\ht0=\ht2
    \{\kern-.625\wd2 \{#1\}\kern-.625\wd2 \}%
  \else
    \left\{\kern-.7\wd0\left\{#1\right\}\kern-.7\wd0\right\}%
  \fi
}

\NewDocumentCommand{\dgalx}{om}{%
  \sbox0{\mathsurround=0pt$#1\{$}%
  \sbox2{\{}%
  \ifdim\ht0=\ht2
    \{\kern-.625\wd2 \{#2\}\kern-.625\wd2 \}%
  \else
    \mathopen{#1\{\kern-.7\wd0 #1\{}
    #2
    \mathclose{#1\}\kern-.7\wd0 #1\}}
  \fi
}

\DeclarePairedDelimiter{\ceil}{\lceil}{\rceil}

\providecommand{\symmdiff}{\mathbin{\mathpalette\xdotminus\relax}}

\usepackage{geometry}
\usepackage{graphicx}

\usepackage{tikz}
\usetikzlibrary{
  arrows.meta,
  positioning}

\ifusetikzexternalize
  \usetikzlibrary{
    arrows.meta,
    external}
  \tikzexternalize[prefix=pdf/]
  \tikzset{external/system call={pdflatex \tikzexternalcheckshellescape -halt-on-error
    -interaction=batchmode -jobname "\image" "\texsource" && 
    pdfseparate -f 1 -l 1 "\image".pdf "\image"_tmp.pdf &&
    mv "\image"_tmp.pdf "\image".pdf}}
  \tikzexternalize

  \newcommand\inputtikzorpdf[1]{
    \tikzsetnextfilename{#1}
    \input{./tikz/#1}
  }
\else

  \newcommand\inputtikzorpdf[1]{\includegraphics{./pdf/#1}}
  \newcommand\tikzsetnextfilename[1]{}

\fi

\usepackage[utf8]{inputenc}
\usepackage{pgfplots}
\usepackage{pgfgantt}
\usepackage{pdflscape}
\pgfplotsset{compat=newest}
\pgfplotsset{plot coordinates/math parser=false}
\pgfplotsset{
    legend image with text/.style={
        legend image code/.code={%
            \node[anchor=center] at (0.3cm,0cm) {#1};
        }
    },
}
\usetikzlibrary{
    matrix,
}
\pgfplotsset{
    compat=1.3,
}

\usepgfplotslibrary{fillbetween}

\usepackage{subcaption}

\def\twofigwidth{0.475\textwidth}
\def\threefigwidth{0.3\textwidth}


\setcounter{tocdepth}{3}

\usepackage[numbers]{natbib}
\bibliographystyle{apalike}

\newcommand{\ignore}[1]{}
\newcommand{\nobibentry}[1]{{\let\nocite\ignore\bibentry{#1}}}

\usepackage[hidelinks]{hyperref}
\usepackage{cleveref}
\usepackage{autonum} 
\newcommand\caref[1]{eq.~\eqref{#1}} 
\usepackage{import}
\usepackage{stackengine}

\usepackage{booktabs}
\usepackage{array}
\usepackage{varwidth} 
\usepackage{multirow}

\newenvironment{manualtimestep}[1]{%
  \timestep
}{\endtimestep}

\newcounter{timeintegration}
\newcommand\compareFormulations[4]{
  \stepcounter{timeintegration}
  \begin{samepage}
  \begin{manualtimestep}{T\arabic{timeintegration}}[#1]\label{step:#2}\ \\
    \nopagebreak
    \begin{minipage}{\twofigwidth}
      \begin{equation}
        #4\tag{T\arabic{timeintegration}.I}\label{eqn:#2:one}
      \end{equation}
    \end{minipage}
    \hfill
    \begin{minipage}{\twofigwidth}
      \begin{equation}
        #3\tag{T\arabic{timeintegration}.II}\label{eqn:#2:two}
      \end{equation}
    \end{minipage}
  \end{manualtimestep}
  \end{samepage}
  \vspace*{\floatsep}
}

\usepackage{enumitem}
\newtheorem{challenge}{Challenges}
\crefname{challenge}{challenge}{challenges}
\newlist{chalenum}{enumerate}{1} 
\setlist[chalenum]{label=\arabic*), ref=\thechallenge.\arabic*}
\crefalias{chalenumi}{challenge}
\crefformat{appendix}{#2#1#3}

\begin{document}
  \begin{frontmatter}

    \author{
      Ronald A. Remmerswaal
    }
    \author{
      Arthur E.P. Veldman
    }

    \address{
      Bernoulli Institute, University of Groningen\\
      PO Box 407, 9700 AK Groningen, The Netherlands
    }

    \begin{abstract}
      The numerical modelling of convection dominated high density ratio two-phase flow poses several challenges, amongst which is resolving the relatively thin shear layer at the interface.
To this end we propose a sharp discretisation of the \twofluid model of the two-phase Navier--Stokes equations. 
This results in the ability to model the shear layer, rather than resolving it, by allowing for a velocity discontinuity in the direction(s) tangential to the interface.

In a previous paper (Remmerswaal and Veldman (2022), \href{https://arxiv.org/abs/2209.14934}{arXiv:2209.14934}) we have discussed the transport of mass and momentum, where the two fluids were not yet coupled.
In this paper an implicit coupling of the two fluids is proposed, which imposes continuity of the velocity field in the interface normal direction.
The coupling is included in the pressure Poisson problem, and is discretised using a multidimensional generalisation of the ghost fluid method.
Furthermore, a discretisation of the diffusive forces is proposed, which leads to recovering the continuous \onefluid solution as the interface shear layer is resolved.

The proposed \twofluid formulation is validated and compared to our \onefluid formulation, where we consider a multitude of two-phase flow problems.
It is demonstrated that the proposed \twofluid model is able to consistently, and sharply, approximate solutions to the inviscid Euler equations, where velocity discontinuities appear analytically as well.
Furthermore, the proposed \twofluid model is shown to accurately model the interface shear layer in viscous problems, and it is successfully applied to the simulation of breaking waves where the model was used to sharply capture free surface instabilities. 
    \end{abstract}

    \title{A sharp, structure preserving \twofluid model for two-phase flow}

    \begin{keyword}
      two-phase flow \sep volume of fluid \sep unresolved shear layer \sep velocity discontinuity
    \end{keyword}
  \end{frontmatter}

  \section{Introduction}\label{sec:intro}
The numerical simulation of two-phase flow is of great interest to engineering problems (e.g. liquid sloshing), as well as to fundamental research into fluid flow (e.g. to study the physics of liquid impacts).
One of the main challenges in performing such simulations is the computational effort required to resolve the shear layer (SL) present at the phase interface.
The shear layer is a thin (relative to other length scales of interest, we will soon justify this by comparing such length scales) region at the interface where viscous effects play a role, and is for example present on the wave crest of a breaking wave, as illustrated by the green region in~\cref{fig:intro:length_scales}.

The accurate prediction of impact pressures resulting from breaking wave impacts is challenging, in particular because it was found~\citep{Lafeber2012a} that the fragmentation of the interface prior to impact results in a high variability of the local interface shape, and thus a high variability of the resulting impact pressures.
This fragmentation is due to the development of free surface instabilities, such as the Kelvin--Helmholtz (KH) instability, resulting from the shearing gas flow $U_\tau$ past the wave crest, as illustrated by the large red arrow in~\cref{fig:intro:length_scales}.
In order to numerically capture such free surface instabilities, a sharp and accurate model for two-phase flow is required, and we must moreover have a sufficiently fine mesh to resolve the most unstable KH wavelength, which will be denoted by $\lambda_\mtext{KH}$.
To obtain a relative impression of the length scales mentioned, we will now compare the most unstable KH wavelength with an estimate of the shear layer thickness $\delta_\mtext{SL}$. 
Doing so we are able to judge whether or not additional mesh refinement is required if we want to resolve the shear layer as well.

Using the single-phase boundary layer theory by~\citet{Blasius1907} we find that the $.99$ thickness of the shear layer in the gas phase can be estimated by
\begin{equation}
  \delta_\mtext{SL}(\tau) = 4.91 \sqrt{\frac{\mu^g \tau}{\rho^g U_\tau}},
\end{equation}
where $\tau$ is the coordinate along the length of the shear layer which starts at $\tau = 0$, $\rho^\pi$ denotes the density of the $\pi$-phase (for $\pi \in \{l, g\}$) and $\mu^\pi$ denotes the corresponding dynamic viscosity.
Here $U_\tau$ denotes the magnitude of the free stream velocity, see also~\cref{fig:intro:length_scales}.
\begin{figure}
  \centering
  \import{inkscape/}{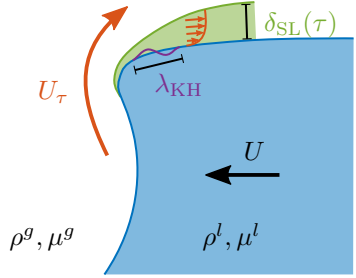}
  \caption{Illustration of a breaking wave moving towards the left.
  The shear layer thickness $\delta_\mtext{SL}(\tau)$ and most unstable wavelength $\lambda_\mtext{KH}$ are indicated.
  Note that for illustrational purposes the shear layer is shown much thicker, relative to the most unstable wavelength, than it would usually be.}
  \label{fig:intro:length_scales}
\end{figure}

According to linearised potential flow theory (see e.g.~\citet[Art. 267 \& 268]{Lamb1932}) the most unstable KH wavelength is given by
\begin{equation}\label{eqn:intro:wavelength}
  \lambda_\mtext{KH} = \frac{3\pi\sigma (\rho^l + \rho^g)}{\rho^g\rho^l U_\tau^2},
\end{equation}
where $\sigma$ denotes the surface tension coefficient.
It follows that the ratio of viscous and inertial length scales is given by
\begin{equation}
  \frac{\delta_\mtext{SL}(\lambda_\mtext{KH})}{\lambda_\mtext{KH}} = 4.91 \sqrt{\frac{\mu^g}{\rho^g U_\tau\lambda_\mtext{KH}}} = 4.91 \sqrt{\frac{\rho^l \mu^g}{3 \pi \sigma (\rho^g + \rho^l)}} \sqrt{U_\tau} \approx \frac{\sqrt{U_\tau}}{40},
\end{equation}
where we have substituted the fluid properties of water and air.
In particular, if we consider velocities up to $30\%$ of the speed of sound in air, in order to justify our assumption of incompressibility, we find that
\begin{equation}\label{eqn:intro:lengthratio}
  \frac{\delta_\mtext{SL}(\lambda_\mtext{KH})}{\lambda_\mtext{KH}} \approx \frac{\sqrt{0.3 \times 330}}{40} \approx \frac{1}{4}.
\end{equation}
Hence we find, according to this admittedly simplified analysis, that for incompressible flow the shear layer is at least a factor four thinner than the most unstable wavelength, and therefore resolving the shear layer always yields a much more stringent resolution restriction than one would obtain when wanting to resolve only the most unstable wavelength $\lambda_\mtext{KH}$.

The aforementioned analysis motivates the use of a \twofluid model, wherein each phase is described by its own momentum equation, resulting in two distinct velocity fields which will be denoted by $\+u^l, \+u^g$.
If the shear layer is not resolved, then numerically the velocity component(s) tangential to the interface can be viewed as discontinuous. 
Therefore a \twofluid model can be utilised to permit such a velocity discontinuity, thereby modelling the shear layer rather than resolving it.
We impose that only the interface normal component of the velocity field is continuous (hence assuming that no phase change can occur)
\begin{equation}\label{eqn:intro:normal_smoothness}
  \jump{u_\eta} = 0,
\end{equation}
thereby allowing for a discontinuity in the tangential component(s) of the velocity field.
Here the jump in a quantity is defined as the difference between the gas and liquid value
\begin{equation}
  \jump{\varphi} \defeq \varphi^g - \varphi^l, \quad \mean{\varphi} \defeq \varphi^g + \varphi^l,
\end{equation}
where we have introduced a complimentary notation for the sum over the phases.
In \onefluid models continuity of the full velocity field is imposed instead
\begin{equation}\label{eqn:intro:full_smoothness}
  \jump{\+u} = \+0.
\end{equation}

Several multi-equation models for two-phase flow can be found in the literature, resulting in at most two mass conservation equations, two momentum conservation equations, two energy equations and an equation that governs the advection of the interface.
In~\citet{Saurel1999} a hyperbolic seven equation model (2-2-2-1) is considered for the simulation of two-phase compressible flow.
The energy equations are omitted in~\citet{Preisig2010}, who therefore consider a five equation model (2-2-0-1), and in~\citet{Favrie2014} a six equation model (2-1-2-1) is considered where a momentum equation for the mixture is considered.

For incompressible two-phase flow not much can be found in the literature regarding multi-equation models (in particular two-velocity models).
Note that for incompressible flow the mass conservation equations are implied by the equation that governs the advection of the interface, together with a suitable incompressibility constraint (which we will count as a mass conservation equation).
In~\citet{Desjardins2010} and \citet{Vukcevic2017} two separate equations are used for the transport of momentum, but after each time step the velocity jump is removed in the pressure Poisson problem by imposing~\cref{eqn:intro:full_smoothness}. 
In our terminology this constitutes as a \onefluid model (1-1-0-1).
Our proposed \twofluid model is a sharp four equation model (1-2-0-1) wherein the velocity is permitted to be discontinuous according to~\cref{eqn:intro:normal_smoothness}.

In a previous paper~\citep{pubs_Remmerswaal2022a} we have posed the following three challenges regarding the development of a sharp two-velocity model for two-phase flow.
\begin{challenge}[\Twofluid formulation]\leavevmode
  In the presence of a velocity discontinuity, how to\ldots
  \begin{chalenum}
    \item transport mass and momentum\label{chal:mass_mom}
    \item impose continuity of the interface normal component of velocity\label{chal:continuity}
    \item ensure that the viscous \onefluid solution is obtained under mesh refinement\label{chal:refinement}
  \end{chalenum}
\end{challenge}
\Cref{chal:mass_mom} was considered previously~\citep{pubs_Remmerswaal2022a}, and in the remainder of this paper we will discuss~\cref{chal:continuity,chal:refinement}.

\subsection*{Overview}
We start by giving an overview of our proposed numerical models for the one- and \twofluid formulation in~\cref{sec:overview}.
Then we turn to the Poisson problem in~\cref{sec:poisson} where we propose a generalisation of the ghost fluid method in order to consistently impose~\cref{eqn:intro:normal_smoothness}, and thereby addressing~\cref{chal:continuity}.
In~\cref{sec:diffusion} we propose a simple diffusion model for the \twofluid formulation which will ensure that the velocity discontinuity vanishes as the shear layer is resolved, hence addressing~\cref{chal:refinement}.
A new well-balanced and mimetic gravity model is introduced in~\cref{sec:gravity}.
Numerical results are shown in~\cref{sec:results} and concluding remarks are given in~\cref{sec:conclusion}.
  \section{Overview of the (one- and) \twofluid model}\label{sec:overview}
The notation is based on the notation presented in~\citet{Lipnikov2014}, and is identical to the notation used in~\citet{pubs_Remmerswaal2022a}.
We consider a staggered rectilinear mesh (with block-based adaptive refinement~\citep{VanderPlas2017}) for which we denote the set of centred control volumes by $\mathcal{C}$, where each such control volume $c\in\mathcal{C}$ has faces which are denoted by $f \in \mathcal{F}(c)$, see also~\cref{fig:notation:notation:mesh_c}.
For each face $f$ we denote the corresponding staggered control volume by $\omega_f$, which is illustrated in~\cref{fig:notation:notation:mesh_f}, and is defined as the extrusion of the face $f$ to its neighbouring control volume centroids.
The control volumes neighbouring a face $f$ are denoted by $\mathcal{C}(f) \subset \mathcal{C}$, see also~\cref{fig:notation:notation:mesh_f}.
Scalar functions (e.g. the volume fraction and pressure) are defined at the centroid of the centred control volume, and the set of such scalar functions is denoted by $\mathcal{C}^h$, whereas vector valued functions (e.g. the velocity) are defined on the centroid of a face, and the set of such functions is denoted by $\mathcal{F}^h$.
The function $u \in \mathcal{F}^h$ approximates the face normal (denoted by $\+n_f$) component of the vector field $\+u$ at the face centroid $\+x_f$: $u^{(n)}_f \approx \+n_f \cdot \+u(t^{(n)}, \+x_f)$.


We will now describe the time integration steps used for both our one- and \twofluid model, that is, how to advance the liquid volume fraction $\volfrac^{l,(n)} \in \mathcal{C}^h$, velocities $u^{l,(n)}, u^{g,(n)} \in \mathcal{F}^h$ and pressure $p^{(n)} \in \mathcal{C}^h$ to the next time level $t = t^{(n+1)}$?
Here the centred volume fraction of the $\pi$-phase (for $\pi \in \{l, g\}$) is defined as the fraction of $\pi$-phase volume contained inside the control volume $c$
\begin{equation}\label{eqn:overview:fractiondef}
  \volfrac_c^{\pi,(n)} \defeq \frac{|c \cap \Omega^{\pi,(n)}|}{|c|},
\end{equation}
where $\Omega^{\pi,(n)} \subset \Omega \subset \mathbb{R}^d$ is the part of the $d$-dimensional domain corresponding to the $\pi$-phase at $t = t^{(n)}$, and $|A|$ denotes the volume (area in 2D) of some set $A$.

We note that our proposed methods allow for the presence of arbitrary geometry, which is modelled implicitly using the cut-cell method~\citep{kleefsman2005volume,Droge2007}.
This means that in such cut-cells we find $\mean{\volfrac} < 1$, as the remainder of the control volume is filled by the geometry.
Throughout this paper it will however be assumed that each control volume consists entirely of liquid and/or gas, i.e. $\mean{\volfrac} = 1$, for ease of discussion.

\begin{figure}
  \captionbox{A control volume $c \in \mathcal{C}$ with boundary $\partial c = \bigcup_{f \in \mathcal{F}(c)} f$, where $\mathcal{F}(c) = \{f_1, f_2, f_3, f_4\}$.
  Here $\orientation_{c,f_1} = \orientation_{c,f_2} = +1$ and $\orientation_{c,f_3} = \orientation_{c,f_4} = -1$.\label{fig:notation:notation:mesh_c}}
  [\twofigwidth]{
    \import{inkscape/}{the_mesh_c.pdf_tex}
  }
  \hfill
  \captionbox{A staggered control volume $\omega_f$ based on the extrusion of the face $f \in \mathcal{F}$ (dashed line) to the centroids of the neighbouring control volumes $\mathcal{C}(f) = \{c_1, c_2\}$. 
  The faces of the staggered control volume are given by $\mathcal{G}(\omega_f) = \{g_1, g_2, g_3, g_4\}$.\label{fig:notation:notation:mesh_f}}
  [\twofigwidth]{
    \import{inkscape/}{the_mesh_f.pdf_tex} 
  }
\end{figure}
The time integration starts with the advection of the interface, as well as the transport of mass and momentum, which are described in detail in~\citet{pubs_Remmerswaal2022a}.
The advection of the interface, for which we use the dimensionally unsplit geometric VOF method based on donating regions (DRs) from~\citep{Owkes2014}, is done according to~\cref{step:overview:fraction_advection}.
\compareFormulations{Interface advection}{overview:fraction_advection}{
  \volfrac^{l,(n+1)} = \volfrac^{l,(n)} - \dt \divh\volumefluxTwo^{l,(n)}
}{
  \volfrac^{l,(n+1)} = \volfrac^{l,(n)} - \dt \divh\volumefluxOne^{l,(n)} 
}
Note that on the left-hand side we show the \onefluid formulation whereas on the right-hand side we show the \twofluid formulation, which we will also do for the remaining time integration steps.
Here $\volumefluxOne^l$ and $\volumefluxTwo^l$ denote the liquid volume flux (i.e., the amount of liquid that flows through a face $f$ during a time step) resulting from the one- and \twofluid formulation respectively, and $\divh: \mathcal{F}^h \rightarrow \mathcal{C}^h$ denotes the finite volume approximation of the divergence operator, which is defined as
\begin{equation}\label{eqn:notation:divergence}
  |c|(\divh u)_c \defeq \sum_{f\in\mathcal{F}(c)} \orientation_{c,f} |f| u_f \approx \integral{\partial c}{u_n}{S},
\end{equation}
where $\orientation_{c,f} \in \{-1, +1\}$ encodes the orientation of the face normal $\+n_f$ such that $\orientation_{c,f} \+n_f$ points out of the control volume $c$, see also~\cref{fig:notation:notation:mesh_c}.
We let the gradient operator $\gradh: \mathcal{C}^h \rightarrow \mathcal{F}^h$ be defined as the negative adjoint of $\divh$, w.r.t. the $L^2$ inner product
\begin{equation}\label{eqn:notation:adjointness}
  \sum_{\mathcal{C}} p \divh u = -\sum_\setextrusion{F} u \gradh p, \quad \forall u \in \mathcal{F}^h, p \in \mathcal{C}^h,
\end{equation}
where we have introduced the following short-hand notation for numerical integration
\begin{equation}\label{eqn:notation:integration}
  \sum_{\mathcal{C}} p \defeq \sum_{c\in\mathcal{C}} |c|p_c, \quad \sum_\setextrusion{F} u \defeq \sum_{f\in\mathcal{F}} |\omega_f|u_f.
\end{equation}
Using~\cref{eqn:notation:divergence,eqn:notation:adjointness} we find that the gradient operator can explicitly be written as 
\begin{equation}\label{eqn:notation:gradient}
  \gradh p \defeq -\frac{1}{h_f}\sum_{c\in\mathcal{C}(f)} \orientation_{c,f} p_c \approx \+n_f \cdot (\gradient p)(\+x_f),
\end{equation}
where $h_f = {|\omega_f|}/{|f|}$ is the face projected distance between the two neighbouring nodes, see also~\cref{fig:notation:notation:mesh_f}.

As the fluids under consideration are assumed to be incompressible, the transport of mass is a direct consequence of the advection of the interface.
Moreover, since we consider a staggered variable arrangement, it follows that obtaining the velocity from momentum requires a staggered notion of mass, and therefore equivalently requires a staggered volume fraction field which we denote by $\volfracstag^\pi \in \mathcal{F}^h$.
The staggered volume fraction field is defined as the volume weighted average of the centred volume fraction
\begin{equation}\label{eqn:interface:twofluid:volfracstag}
  \volfracstag_f^\pi \defeq \frac{1}{2|\omega_f|}\sum_{c\in\mathcal{C}(f)} |c| \volfrac^\pi_c,
\end{equation}
see also~\cref{fig:notation:notation:mesh_f}.
The staggered mass at $t = t^{(n)}$ is then denoted by $(\volfracstag\rho)^{\pi,(n)}$.

Simultaneous with the advection of the interface, the staggered momentum field is advected using the same mass fluxes as used for the transport of mass
\begin{equation}
  \massfluxOne^\pi \defeq \rho^\pi \volumefluxOne^\pi, \quad \massfluxTwo^l \defeq \rho^l \volumefluxTwo^l, \quad \massflux^g \defeq \rho^g \volumeflux^g,
\end{equation}
where the volume flux $\volumeflux^g$ is obtained using the velocity field $u^g$.
Using the same mass fluxes for the transport of mass and momentum allows for conservation of linear momentum, as well as conservation of kinetic energy in the semi-discrete limit $\dt \rightarrow 0$.
For details we refer to~\citet{Veldman2019} as well as the discussion found in~\citep{pubs_Remmerswaal2022a}.
The momentum field for the $\pi$-phase at $t = t^{(n)}$ is denoted by $(\volfracstag \rho u)^{\pi,(n)}$.
The resulting momentum advection method, which is similar to the method first proposed by~\citet{Rudman1998}, is summarised by~\cref{step:overview:momentum_advection}.
\compareFormulations{Transport of momentum}{overview:momentum_advection}{
  \arraycolsep=1.4pt
  \def\arraystretch{1.4}
  \begin{array}{rl}
    u^{l,*} &= \frac{(\volfracstag\rho u)^{l,(n)} - \dt \roundpar{\advectionstag{\massfluxTwo}u}^{l,(n)}}{(\volfracstag\rho)^{l,(n+1)}}\\
    u^{g,*} &= \frac{(\volfracstag\rho)^{g,*}u^{g,(n)} - \dt \roundpar{\advectionstag{\massflux}u}^{g,(n)}}{(\volfracstag\rho)^{g,*}}
  \end{array}
}{
  \onevelo^{*} = \frac{(\mean{\volfracstag\rho} \onevelo)^{(n)} - \dt \mean{\advectionstag{\massfluxOne} \onevelo}^{(n)}}{\mean{\volfracstag\rho}^{(n+1)}}
}
The continuity of the staggered velocity field in the \onefluid model is emphasised by denoting it by $\onevelo \in \mathcal{F}^h$, and the advection operator is denoted by $\advectionstag{m}: \mathcal{F}^h \rightarrow \mathcal{F}^h$.
The staggered gas volume fraction used in~\cref{eqn:overview:momentum_advection:two}, which is denoted by $\volfracstag^{g,*}$, is used to ensure that $u^{g,*} \equiv 1$ if $u^{g,(n)} \equiv 1$, and does not coincide with $\volfracstag^{g,(n+1)}$.
This is contrary to the liquid phase, where $\volfracstag^{l,(n+1)}$ is used directly; this is possible because, as explained in detail in~\citep{pubs_Remmerswaal2022a}, we have chosen to transport the interface using the (extrapolated) liquid velocity field.
It follows that~\cref{step:overview:momentum_advection} does not conserve linear momentum in the gas phase when the \twofluid formulation is used, which we deem acceptable because the contribution of the gas phase to total linear momentum is small whenever $\rho^g \ll \rho^l$, as is the case in our application.

The transport of mass and momentum is carried out under the following CFL constraint
\begin{equation}\label{eqn:overview:onefluid:dr:cfl}
  \max_{c\in\mathcal{C}} \mathcfl_c < \wycflval,
\end{equation}
where $\mathcfl_c$ denotes the local control volume CFL number, and is defined as
\begin{equation}\label{eqn:overview:onefluid:dr:cfl_cell}
  \mathcfl_c \defeq \frac{\dt}{|c|}\sum_{f\in\mathcal{F}(c)} \posflux{-\orientation_{c,f}|f| \onevelo^{(n)}_f}.
\end{equation}
We require that the CFL limit is bounded $\wycflval < 1$, and we usually use a value of $\wycflval = \frac{3}{4}$.

The discretisation of diffusion, which we summarise in~\cref{step:overview:diff}, is done implicitly in time, and is described in more detail in~\cref{sec:diffusion}.
\compareFormulations{Diffusion}{overview:diff}{
  u^{\pi,**} = u^{\pi,*} + \dt {\dfrac{\stagger \divh  \tvar^\mu}{\mean{\volfracstag\rho}^{(n+1)}}}
}{
  \onevelo^{**} = \onevelo^{*} + \dt {\dfrac{\stagger \divh  \tvar^\mu}{\mean{\volfracstag\rho}^{(n+1)}}}
}
Here $\stagger \divh: \mathcal{G}^h \rightarrow \mathcal{F}^h$ denotes the staggered divergence operator which is defined in terms of the boundary integral over the boundary of the staggered control volume $\omega_f$
\begin{equation}\label{eqn:notation:notation:divergence_stag}
  |\omega_f| (\stagger \divh \tvar^\mu)_f \defeq \sum_{g \in \mathcal{G}(\omega_f)} \stagger\orientation_{f,g} |g| \tvar^\mu_g,
\end{equation}
where $\mathcal{G}(\omega_f)$ is the set of faces of the boundary of $\omega_f$, and $\stagger\orientation_{f,g}$ is such that $\stagger\orientation_{f,g} \stagger{\+n}_g$ points out of the staggered control volume $\omega_f$, see also~\cref{fig:notation:notation:mesh_f}.
The set of all faces of all staggered control volumes is denoted by $\mathcal{G}$, with corresponding function space $\mathcal{G}^h$.
The viscous stress tensor $ \tvar^\mu \in \mathcal{G}^h$ will be further specified in~\cref{sec:diffusion}.
Since diffusion will be treated implicitly in time, it does not yield any time step constraint.

The discretisation of the gravity force $F$, which will be discussed in~\cref{sec:gravity} and is given by~\cref{eqn:gravity:model}, results in~\cref{step:overview:grav}.
\compareFormulations{Gravity}{overview:grav}{
  u^{\pi,***} = u^{\pi,**} + \dt {\frac{F}{\mean{\volfracstag\rho}^{(n+1)}}}
}{
  \onevelo^{***} = \onevelo^{**} + \dt {\frac{{F}}{\mean{\volfracstag\rho}^{(n+1)}}}
}
For gravity waves we must ensure that the velocity~\citep[Art. 231]{Lamb1932} of the shortest wave, which we take equal to\footnote{Using the mesh width $h$ was not sufficient for obtaining stability.} $h / {2\pi}$, does not exceed their numerical speed limit, resulting in the following time step restriction
\begin{equation}\label{eqn:overview:grav_timestep}
  \dt \le \sqrt{\frac{\rho^l + \rho^g}{\abs{\rho^l - \rho^g}}\frac{h}{\abs{\+g}_2}},
\end{equation}
where $\+g$ denotes the acceleration due to gravity.

All that remains now is to project the `predicted' velocity field $u^{\pi,***}$ onto the divergence free subspace of $\mathcal{F}^h$.
This is done via the pressure gradient $\sgrad{\pi}: \mathcal{C}^h \times \mathcal{F}^h \rightarrow \mathcal{F}^h$, where $\xi \in \mathcal{F}^h$ denotes the gradient jump, which will be discussed in~\cref{sec:poisson}.
The gradient has been scaled by the density, includes the effect of surface tension and moreover ensures that the velocity field satisfies~\cref{eqn:intro:normal_smoothness} in some approximate sense for the \twofluid formulation.
The inclusion of surface tension, via the Young--Laplace equation, results in the following time step restriction
\begin{equation}\label{eqn:overview:st_timestep}
  \dt \le \sqrt{\frac{(\rho^l + \rho^g) h^3}{2 \pi \sigma}},
\end{equation}
which corresponds to taking a time step no larger than the oscillation period of a capillary wave of wavelength $h$~\citep[Art. 266]{Lamb1932}.
We summarise the resulting pressure Poisson problem in~\cref{step:overview:poisson}.
{  \arraycolsep=1.4pt
  \def\arraystretch{1.4}
  \compareFormulations{Pressure Poisson}{overview:poisson}{
  \begin{array}{rl}
    u^{\pi,(n+1)} &= u^{\pi,***} - \dt \sgrad{\pi}\\
    \divh\mean{au}^{(n+1)} &= 0\\
    \normalstag \cdot \jumpOper(u^{g}, u^{l})^{(n+1)} &= 0
  \end{array}
}{
  \begin{array}{rl}
    \onevelo^{(n+1)} &= \onevelo^{***} - \dt \sgradone\\
    \divh\onevelo^{(n+1)} &= 0
  \end{array}
}}

Here the fluid apertures $a^\pi \in \mathcal{F}^h$ denote the fraction of the face $f$ contained inside the $\pi$-phase, and are illustrated in~\cref{fig:interface:twofluid:cutcell}.
The interface normal, interpolated to the face $f$ and subsequently normalised, is denoted by $\normalstag \in [\mathcal{F}^h]^d$.
The additional constraint on the velocity fields, given by the third equation in~\cref{eqn:overview:poisson:two}, implicitly imposes~\cref{eqn:intro:normal_smoothness} and will be discussed in detail in~\cref{sec:poisson}.
There we will also define the scaled gradient operator $\gfmgrad^\pi$ as well as the jump operator $\jumpOper$.

  \section{Poisson problem}\label{sec:poisson}
We will now focus our attention on the most crucial part of the discretisation for our proposed \twofluid formulation: the coupling of the two fluids via the pressure Poisson equation.
Contrary to the approach followed in~\citep{Desjardins2010,Vukcevic2017}, wherein the velocity jump is simply removed after each time step, we impose that merely the normal component of the velocity is continuous, as stated in~\cref{eqn:intro:normal_smoothness}.

We start by briefly discussing the pressure Poisson problem discrete in time, but continuous in space.
Then we discuss discretisation approaches for the divergence as well as gradient operator in the presence of a velocity discontinuity, resulting in a generalisation of the ghost fluid method (GFM)~\citep{Liu2000}.
The resulting method is then validated using an academic test case, and slightly altered to ensure that conservation properties are adhered to.
We then discuss the relation between the resulting one- and \twofluid formulations.

\subsection{The pressure Poisson problem}\label{sec:poisson:analytical}
Before we discuss the full discretisation of the pressure Poisson problem shown in~\cref{eqn:overview:poisson:two}, we first present the continuous in space but discrete in time equivalent.
Continuous in space, the velocity update due to the pressure is given by
\begin{equation}\label{eqn:poisson:jumps:velo_update}
  \+u^{\pi,(n+1)} = \+u^{\pi,***} - \frac{\dt}{\rho^\pi} \gradient p^\pi.
\end{equation}
Imposing~\cref{eqn:intro:normal_smoothness} on $\+u^{\pi,(n+1)}$ in the left-hand side of~\cref{eqn:poisson:jumps:velo_update} results in the following jump condition on the pressure gradient 
\begin{equation}\label{eqn:poisson:gradjump}
  \jump{\frac{1}{\rho} \partial_\eta p} = \frac{\jump{u_\eta^{***}}}{\dt}.
\end{equation}
This novel jump condition is constructed exactly such that our velocity field remains continuous in the interface normal direction, and the discretisation of this jump condition is therefore essential in our proposed \twofluid formulation.
Surface tension is modelled using the Young--Laplace equation
\begin{equation}\label{eqn:poisson:valjump}
  \jump{p} = -\sigma \kappa,
\end{equation}
where $\kappa$ denotes the interface curvature (sum of the principal curvatures) and $\sigma$ denotes the surface energy coefficient.
We therefore have two jump conditions, as given by~\cref{eqn:poisson:gradjump,eqn:poisson:valjump}, on the pressure which need to be taken into account when discretising the gradient operator.

The conservative form of the divergence constraint can be derived from the mass conservation equations, which we repeat here for convenience
\begin{equation}\label{eqn:poisson:mass_cons}
  \integral{c^{\pi,(n+1)}}{\rho^\pi}{V} - \integral{c^{\pi,(n)}}{\rho^\pi}{V} + \oneDIntegral{t^{(n)}}{t^{(n+1)}}{\integral{\partial c^\pi(t) \setminus I(t)}{\rho^\pi u^\pi_n}{S}}{t} = 0.
\end{equation}
Here $c^\pi(t) = c \cap \Omega^\pi(t)$ and $I(t)$ denotes the interface that separates the two fluid phases.
Dividing this equation by $\rho^\pi$, which is assumed constant, adding the result for both phases, and subsequently letting $\dt = t^{(n+1)} - t^{(n)} \rightarrow 0$, results in the divergence constraint
\begin{equation}\label{eqn:poisson:cons_constraint}
  \integral{\partial c}{u_n}{S} = 0.
\end{equation}
In what follows we will discuss the discretisation of the Poisson problem, as defined by~\cref{eqn:poisson:cons_constraint,eqn:poisson:gradjump,eqn:poisson:valjump,eqn:poisson:jumps:velo_update}.

\subsection{Discrete pressure Poisson problem}
The discretisation of the Laplace operator in the presence of discontinuities in the coefficients and/or solution can be done in various ways~\citep{Leveque1994,Ewing1999,Li2003,Guittet2015,Egan2020}.
We are specifically interested in a discretisation for which the divergence and gradient operator are discretised separately since we want the divergence constraint to hold exactly.
Therefore a discretisation which directly discretises the Laplace operator as a whole is not of interest, and in what follows we will separately discuss the discretisation of the divergence as well as gradient operator.

\begin{figure}
  \subcaptionbox{Definition of the face aperture.\label{fig:interface:twofluid:cutcell_aperture}}
  [\twofigwidth]{
\begingroup%
  \makeatletter%
  \providecommand\color[2][]{%
    \errmessage{(Inkscape) Color is used for the text in Inkscape, but the package 'color.sty' is not loaded}%
    \renewcommand\color[2][]{}%
  }%
  \providecommand\transparent[1]{%
    \errmessage{(Inkscape) Transparency is used (non-zero) for the text in Inkscape, but the package 'transparent.sty' is not loaded}%
    \renewcommand\transparent[1]{}%
  }%
  \providecommand\rotatebox[2]{#2}%
  \newcommand*\fsize{\dimexpr\f@size pt\relax}%
  \newcommand*\lineheight[1]{\fontsize{\fsize}{#1\fsize}\selectfont}%
  \ifx\svgwidth\undefined%
    \setlength{\unitlength}{132.40209698bp}%
    \ifx\svgscale\undefined%
      \relax%
    \else%
      \setlength{\unitlength}{\unitlength * \real{\svgscale}}%
    \fi%
  \else%
    \setlength{\unitlength}{\svgwidth}%
  \fi%
  \global\let\svgwidth\undefined%
  \global\let\svgscale\undefined%
  \makeatother%
  \begin{picture}(1,0.50693777)%
    \lineheight{1}%
    \setlength\tabcolsep{0pt}%
    \put(0,0){\includegraphics[width=\unitlength,page=1]{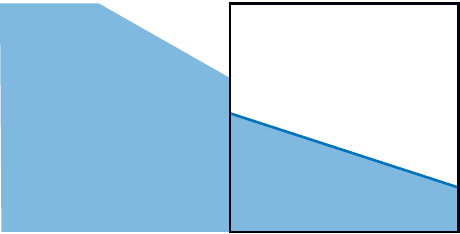}}%
    \put(0.55800033,0.38402477){\color[rgb]{0,0,0.10980392}\makebox(0,0)[lt]{\lineheight{1.25}\smash{\begin{tabular}[t]{l}$f^g$\end{tabular}}}}%
    \put(0,0){\includegraphics[width=\unitlength,page=2]{cutcell_aperture.pdf}}%
    \put(0.38562297,0.13654514){\color[rgb]{0,0,0.10980392}\makebox(0,0)[lt]{\lineheight{1.25}\smash{\begin{tabular}[t]{l}$f^l$\end{tabular}}}}%
    \put(0.40350435,0.42550979){\color[rgb]{0,0.44705882,0.74509804}\makebox(0,0)[lt]{\lineheight{1.25}\smash{\begin{tabular}[t]{l}$\+\eta_{c_1}$\end{tabular}}}}%
    \put(0.85697869,0.17946067){\color[rgb]{0,0.44705882,0.74509804}\makebox(0,0)[lt]{\lineheight{1.25}\smash{\begin{tabular}[t]{l}$\+\eta_{c_2}$\end{tabular}}}}%
  \end{picture}%
\endgroup%

  }
  \hfill
  \subcaptionbox{Boundary integral split into two parts.\label{fig:interface:twofluid:cutcell_divergence}}
  [\twofigwidth]{
\begingroup%
  \makeatletter%
  \providecommand\color[2][]{%
    \errmessage{(Inkscape) Color is used for the text in Inkscape, but the package 'color.sty' is not loaded}%
    \renewcommand\color[2][]{}%
  }%
  \providecommand\transparent[1]{%
    \errmessage{(Inkscape) Transparency is used (non-zero) for the text in Inkscape, but the package 'transparent.sty' is not loaded}%
    \renewcommand\transparent[1]{}%
  }%
  \providecommand\rotatebox[2]{#2}%
  \newcommand*\fsize{\dimexpr\f@size pt\relax}%
  \newcommand*\lineheight[1]{\fontsize{\fsize}{#1\fsize}\selectfont}%
  \ifx\svgwidth\undefined%
    \setlength{\unitlength}{67.23459213bp}%
    \ifx\svgscale\undefined%
      \relax%
    \else%
      \setlength{\unitlength}{\unitlength * \real{\svgscale}}%
    \fi%
  \else%
    \setlength{\unitlength}{\svgwidth}%
  \fi%
  \global\let\svgwidth\undefined%
  \global\let\svgscale\undefined%
  \makeatother%
  \begin{picture}(1,1.00000017)%
    \lineheight{1}%
    \setlength\tabcolsep{0pt}%
    \put(0,0){\includegraphics[width=\unitlength,page=1]{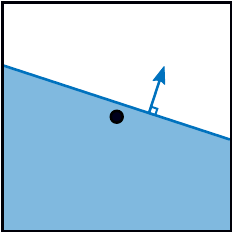}}%
    \put(0.71325294,0.5592951){\color[rgb]{0,0.44705882,0.74509804}\makebox(0,0)[lt]{\lineheight{1.25}\smash{\begin{tabular}[t]{l}$\+\eta_{c}$\end{tabular}}}}%
    \put(0,0){\includegraphics[width=\unitlength,page=2]{cutcell_divergence.pdf}}%
    \put(0.06049272,0.14162407){\color[rgb]{0.85490196,0.3254902,0.09803922}\makebox(0,0)[lt]{\lineheight{1.25}\smash{\begin{tabular}[t]{l}$\integral{\partial c^l \setminus I}{u^l_n}{S}$\end{tabular}}}}%
    \put(0.13997184,0.82765447){\color[rgb]{0.49411765,0.18431373,0.55686275}\makebox(0,0)[lt]{\lineheight{1.25}\smash{\begin{tabular}[t]{l}$\integral{\partial c^g \setminus I}{u^g_n}{S}$\end{tabular}}}}%
    \put(11.0319699,-8.75232437){\color[rgb]{0,0,0}\makebox(0,0)[lt]{\begin{minipage}{6.15543658\unitlength}\raggedright \end{minipage}}}%
  \end{picture}%
\endgroup%

  }
  \caption{Illustration of the cut-cell method applied to the approximation of the divergence constraint given by~\cref{eqn:interface:twofluid:cutcell:div_approx}.}
  \label{fig:interface:twofluid:cutcell}
\end{figure}
The boundary integral in~\cref{eqn:poisson:cons_constraint} can be written as a sum of two boundary integrals, where we sharply distinguish the integration over the liquid and gaseous parts of the boundary (see also~\cref{fig:interface:twofluid:cutcell_divergence})
\begin{equation}
  \integral{\partial c}{u_n}{S} = \integral{\partial c^l \setminus I}{u^l_n}{S} + \integral{\partial c^g \setminus I}{u^g_n}{S}.
\end{equation}
Hence for the sharp discretisation of the boundary integral we need to sharply identify which part of a face $f \subset \partial c$ is inside the liquid, and which part is in the gas phase.
To this end we let the face aperture $a^\pi \in \mathcal{F}^h$ be defined as
\begin{equation}
  a^{\pi,(n)}_f \defeq \frac{|f \cap \Omega^{\pi,(n)}|}{|f|}.
\end{equation}
Note that since a \emph{piecewise} reconstruction of the interface is used, the face aperture is averaged using the approximations from both sides of the face, as shown in~\cref{fig:interface:twofluid:cutcell_aperture}.

The face apertures allow for the following cut-cell~\citep{Crockett2011} approximation of~\cref{eqn:poisson:cons_constraint}
\begin{equation}\label{eqn:interface:twofluid:cutcell:div_approx}
  |c|\divh(\mean{a u})_c = 0.
\end{equation}
The cut-cell divergence operator is illustrated in~\cref{fig:interface:twofluid:cutcell_divergence}.
The advantage of using such a cut-cell approximation as opposed to using, e.g., the GFM, is that the cut-cell method naturally satisfies the discrete equivalent of Gauss's theorem (assuming $\jump{u_\eta} = 0$)
\begin{equation}\label{eqn:poisson:gauss_domain}
  \integral{\Omega}{\divergence \+u}{V} = \integral{\partial\Omega}{u_n}{S} \quad \xrightarrow[\text{equivalent}]{\text{discrete}} \quad \sum_{\mathcal{C}} \divh\mean{au} = \sum_{f\in\mathcal{F}_\Gamma} |f|u_f,
\end{equation}
where $\mathcal{F}_\Gamma \subset \mathcal{F}$ is the set of faces that lie on the boundary, and we assume that $\+n_f$ is outward pointing at the boundary (for notational convenience).
This is not just a property that is `nice to have', but is required to ensure that the resulting linear system of equations has a right-hand side which is in the image of the linear operator, and thus has a solution.
The image of the linear operator is nontrivial because this linear operator has a nontrivial null space spanned by $p \equiv 1$ due to the Neumann boundary condition imposed on the pressure.

In the remainder of this section we will discuss the discretisation of the gradient operator in the presence of the jump conditions given by~\cref{eqn:poisson:valjump,eqn:poisson:gradjump}.

\subsection{The jump interpolant}
\begin{figure}
  \centering
  \import{inkscape/}{mdgfm_interpolant.pdf_tex}
  \caption{Illustration of the jump interpolants~\cref{eqn:poisson:jump_interpolant:jump_interpolant,eqn:poisson:jump_interpolant:jump_interpolant_beta}.
  The set of neighbouring faces whose face normal is perpendicular to that of $f$ is given by $\mathcal{F}^{\perp_1}(f) = \{h_1, h_2, h_3, h_4\}$, whereas the set of neighbouring faces whose face normal coincides with that of $f$ is given by $\{h^g, h^l\}$.
  Note that the face $h_1$ does not contribute to the jump interpolant since the corresponding weight vanishes $w^{\perp_1}_{f,h_1} = 0$ (see~\cref{eqn:poisson:jump_interpolant:weight}).
  }\label{fig:poisson:interpolant:example}
\end{figure}
The imposition of~\cref{eqn:poisson:gradjump} is not straightforward when a staggered variable arrangement is used: the projection of the gradient in the interface normal direction necessarily requires interpolation since only the face normal component of the gradient is defined at any face.
We will now define the jump interpolant $\jumpOper$ which, for a staggered velocity field $u^\pi \in \mathcal{F}^h$, approximates the vector-valued jump at the face centroid
\begin{equation} 
  \jumpOper: [\mathcal{F}^h]^2 \rightarrow [\mathcal{F}^h]^d, \quad (\jumpOper(u^g, u^l))_f \approx \jump{\+u}(\+x_f).
\end{equation}

The staggered velocity $u^{\pi,(n)}_f$ is defined if and only if the corresponding staggered volume fraction is non-zero $\volfracstag^{\pi,(n)}_f > 0$, and therefore a mixed face is defined as one for which both staggered volume fractions are non-zero.
It follows that for a mixed face $f \in \mathcal{F}$ we can easily compute the face normal component of the jump as $\jump{u}_f$, resulting in the face normal part $\jump{u}_f \+n_f$ of the vector-valued jump.
The jump interpolant $\jumpOper$ is then defined by specifying how we approximate the parts of the vector-valued jump which are tangential to $\+n_f$: the face tangential parts.

For each face $f$ we define a subset $\mathcal{F}^{\perp_k}(f) \subset \mathcal{F}$, for $k=1,\ldots,d-1$, as the set of neighbouring faces whose face normal is perpendicular to that of $f$, as shown by the horizontal arrows in~\cref{fig:poisson:interpolant:example}.
These subsets provide a way to interpolate the face tangential parts of the velocity jump, resulting in the following definition of the jump interpolant for mixed faces
\begin{equation}\label{eqn:poisson:jump_interpolant:jump_interpolant}
  \jumpOper^1(u^{g},u^{l})_f \defeq \underbrace{\jump{u}_f \+n_f}_\text{face normal part} + \sum_{k=1}^{d-1} \underbrace{\sum_{h \in \mathcal{F}^{\perp_k}(f)} w^{\perp_k}_{f,h}\jump{u}_h \+n_h}_\text{$k$-th face tangential part},
\end{equation}
where face tangential parts are averaged using the weights $w^{\perp_k}_{f,h}$, resulting in a first-order accurate interpolant.
The weight $w^{\perp_k}_{f,h}$ is defined such that it is non-zero only for mixed faces $h$ (for which the jump $\jump{u}_h$ is defined)
\begin{equation}\label{eqn:poisson:jump_interpolant:weight}
  w^{\perp_k}_{f,h} = \frac{\volfracstag^g_h\volfracstag^l_h}{\sum_{h' \in \mathcal{F}^{\perp_k}(f)} \volfracstag^g_{h'}\volfracstag^l_{h'}},
\end{equation}
and moreover is such that the interpolant varies continuously in time, as a consequence of the (staggered) volume fractions varying continuously in time.

Given the jump interpolant $\jumpOper^1$ we are now able to compute the interface normal component of the jump in a staggered field.
We will use this in conjunction with the ghost fluid method (GFM), which will be introduced next.

\subsection{The ghost fluid method}
\begin{figure}
  \centering
  \import{inkscape/}{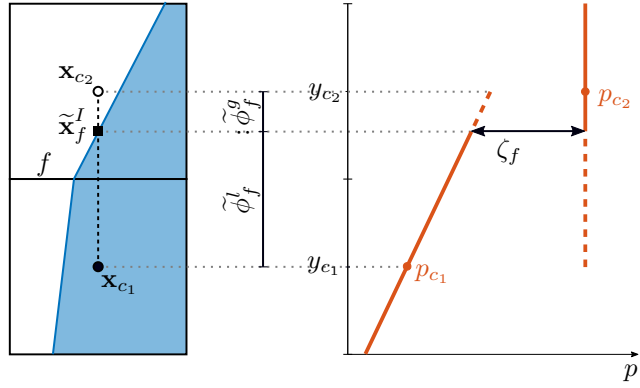}
  \caption{The distance $\stagger\phi^\pi_f$ is defined as the fraction of the dashed line in the left-hand side figure that is contained in the $\pi$-phase, and is approximated using the sharp reconstruction of the interface.
  The right-hand side figure shows the pressure along the dashed line of the left-hand side figure, which has a discontinuity in both the value as well as the slope.}
  \label{fig:poisson:gfm:distance_fun}
\end{figure}
In~\citet{Liu2000} the GFM is introduced, which is a numerical method for computing a derivative in the presence of value and derivative jumps.
We denote the value and derivative jumps as
\begin{equation}\label{eqn:poisson:gfm:jumps}
  \zeta_f = \jump{p}(\stagger{\+x}^I_f), \quad \xi_f = \+n_f \cdot \jump{\frac{1}{\rho} \gradient p}(\stagger{\+x}^I_f),
\end{equation}
respectively, where $\stagger{\+x}^I_f$ is located on the line connecting $\+x_{c_1}$ and $\+x_{c_2}$ as well as on the interface $I$, see also~\cref{fig:poisson:gfm:distance_fun}.
We let $\stagger\phi^\pi$ denote the unsigned and nondimensional face projected distance (for which $\mean{\stagger\phi} = 1$) from the node in the $\pi$-face to the position at the interface $\stagger{\+x}^I_f$.

Imposing the jump conditions at first-order accuracy results in the GFM gradient (a derivation is given in~\cref{sec:app:gfm})
\begin{equation}\label{eqn:poisson:mdgfm:gfm}
  \sgradf{\pi} \defeq \frac{(\gradh p)_f + \zeta (\gradh\chi^l)_f + \gfmthing^\pi_f \xi_f}{\mean{\stagger\phi \rho}_f} \approx \frac{1}{\rho^\pi}\+n_f \cdot (\gradient p^\pi)(\+x_f),
\end{equation}
where $\chi^l \in \mathcal{C}^h$ denotes the liquid indicator function for which $\chi^l_c \in \{0, 1\}$, and 
\begin{equation}\label{eqn:poisson:gfm:rtilde}
  \gfmthing^l \defeq -\stagger\phi^g \rho^g, \quad \gfmthing^g \defeq \stagger\phi^l \rho^l,
\end{equation}
such that $\jump{\gfmthing} = \mean{\stagger\phi\rho}$ and therefore $\jump{\sgrad{}} = \xi$, as desired. 

The GFM requires the jump of the face normal derivative of the pressure, rather than the interface normal derivative given by~\cref{eqn:poisson:gradjump}.
We can write the jump of the face normal derivative in terms of the interface normal and tangential jumps as follows (for simplicity in 2D)
\begin{equation}
  \xi_f = \+n_f \cdot \squarepar{\normalstag_f \jump{\frac{1}{\rho} \partial_\eta p}(\+x^I_f) + \tangentstag_f \jump{\frac{1}{\rho} \partial_\tau p}(\+x^I_f)},
\end{equation}
where $\tangentstag_f$ denotes the vector which is orthogonal to the staggered interface normal $\normalstag_f$.
Neglecting the interface tangential component (which is multiplied by $\+n_f \cdot \tangentstag_f$) of the pressure gradient jump, as proposed by~\citet{Liu2000}, results in
\begin{equation}\label{eqn:poisson:mdgfm:liujump}
  \xi^\mtext{GFM} \defeq \frac{\+n \cdot \normalstag}{\dt} \normalstag \cdot \jumpOper(u^{g},u^{l})^{***},
\end{equation}
where for notational convenience we let $\jumpOper(u^{g},u^{l})^{***} = \jumpOper(u^{g,***},u^{l,***})$.
Note that~\cref{eqn:poisson:mdgfm:liujump} will be inconsistent since in general $\+n_f \cdot \tangentstag_f \neq 0$, and therefore results in undesired smearing of the jump condition given by~\cref{eqn:poisson:gradjump}.

In~\citet{Egan2020} the xGFM is proposed as a generalisation to the GFM which resolves the aforementioned inconsistency.
The xGFM solves a sequence of Poisson problems, where each such Poisson problem is discretised using the GFM method.
This approach will not be considered here. 
Instead we will propose a method to impose~\cref{eqn:intro:normal_smoothness} directly and consistently using a \emph{single} Poisson solve in~\cref{sec:poisson:mdgfm}, thereby proposing an alternative to xGFM for the generalisation of the GFM for imposing jump conditions of the form~\cref{eqn:poisson:gradjump}.


\subsection{A multidimensional variant of the ghost fluid method}\label{sec:poisson:mdgfm}
We will now introduce a multidimensional generalisation of the GFM (MDGFM) which consistently imposes~\cref{eqn:poisson:gradjump}, that is, without neglecting the interface tangential components of the pressure gradient jump.

Using the jump interpolant~\cref{eqn:poisson:jump_interpolant:jump_interpolant} we can directly (and consistently) impose~\cref{eqn:poisson:gradjump}, resulting in
\begin{equation}\label{eqn:poisson:mdgfm:gradient_jump}
  \normalstag \cdot \jumpOper\roundpar{\sgrad{g}, \sgrad{l}} = \frac{\normalstag \cdot \jumpOper(u^{g},u^{l})^{***}}{\dt},
\end{equation}
where the gradient $\sgrad{\pi}$ is given by~\cref{eqn:poisson:mdgfm:gfm} with unknown $p, \xi$.
Note that $\jump{\sgrad{}} = \xi$ and therefore, according to the definition of the jump interpolant~\cref{eqn:poisson:jump_interpolant:jump_interpolant}, the left hand-side of~\cref{eqn:poisson:mdgfm:gradient_jump} will depend only on the gradient jump $\xi$ and in particular not on the pressure $p$.
This implies that the gradient jump can be found independently from the pressure, which is remarkable since we have not imposed any condition on the interface tangential component of the gradient jump.

Regardless of this remarkable independence, we can use~\cref{eqn:poisson:mdgfm:gradient_jump} to express the jump $\xi_f$ in terms of its neighbouring values $\xi_h$
\begin{equation}\label{eqn:poisson:mdgfm:impose_gradient_jump_inverted_bounded_unbounded}
  \xi_f = \dt^{-1}{\jump{u}^{***}_f} + \sum_{k=1}^{d-1}\sum_{h \in \mathcal{F}^{\perp_k}(f)} w^{\perp_k}_{f,h} \roundpar{\dt^{-1}\jump{u}^{***}_h - \xi_h} \ratio{f,h},
\end{equation}
where the ratio of face normal components of the interface normal $\normalstag_f$ is given by
\begin{equation}
  \ratio{f,h} \defeq \frac{\normalstag_f \cdot \+n_h}{\normalstag_f \cdot \+n_f}.
\end{equation}
We find that the ratio $\ratio{f,h}$ tends to infinity $\ratio{f,h} \rightarrow \pm \infty$ if the interface normal $\normalstag_f$ is tangent to the face $f$.
The aforementioned unboundedness results in an ill-posed linear system of equations when solving for the jump $\xi$, making it impossible to satisfy~\cref{eqn:poisson:mdgfm:gradient_jump} in practice using the jump interpolant given by~\cref{eqn:poisson:jump_interpolant:jump_interpolant}, and therefore we now propose a slight modification of the jump interpolant.

The remarkable independence of the jump $\xi$ on the pressure $p$ suggests modifying the jump interpolant in a way that the jump becomes dependent on the pressure.
We propose the following redefinition of the jump interpolant (cf.~\cref{eqn:poisson:jump_interpolant:jump_interpolant})
\begin{equation}\label{eqn:poisson:jump_interpolant:jump_interpolant_beta}
  \jumpOper^\beta(u^{g},u^{l})_f \defeq \squarepar{\beta_f^{-1}\jump{u}_f + (1-\beta_f^{-1})(u^g_{h^g} - u^l_{h^l})} \+n_f + \sum_{k=1}^{d-1}\sum_{h \in \mathcal{F}^{\perp_k}(f)} w^{\perp_k}_{f,h} \jump{u}_h \+n_h,
\end{equation}
where $\beta_f \ge 0$ is some non-negative coefficient (when solving~\cref{eqn:poisson:jump_interpolant:jump_interpolant_beta} for $\xi_f$, resulting in~\cref{eqn:poisson:mdgfm:impose_gradient_jump_inverted_bounded_beta}, we find that $\beta_f$ appears rather than $\beta_f^{-1}$, and therefore $\beta_f = 0$ is permitted) and $h^\pi \in \mathcal{F}$ are neighbouring faces with $\+n_f = \+n_{h^\pi}$, $\rho^l\volfracstag^{l}_{h^{l}} > \rho^g\volfracstag^{g}_{h^{l}}$ and $\rho^g\volfracstag^{g}_{h^{g}} > \rho^l\volfracstag^{l}_{h^{g}}$, as shown in~\cref{fig:poisson:interpolant:example}.
We let either $h^g = f$ or $h^l = f$ depending on which of the corresponding masses is largest inside $\omega_f$.
Thus we average two different approximations of the face normal component of the jump.
Note that when $\beta \equiv 1$ we re-obtain our initial definition of the jump interpolant as given by~\cref{eqn:poisson:jump_interpolant:jump_interpolant}.

We will now construct the coefficient $\beta_f$ such that the jump $\xi_f$ is always well-defined, in the sense that any coefficient appearing in its definition is always bounded.
To this end, we again (as we did with~\cref{eqn:poisson:jump_interpolant:jump_interpolant} resulting in~\cref{eqn:poisson:mdgfm:impose_gradient_jump_inverted_bounded_unbounded}) impose~\cref{eqn:poisson:mdgfm:gradient_jump} and express the jump $\xi_f$ in terms of the neighbouring jump values, which now results in
\begin{multline}\label{eqn:poisson:mdgfm:impose_gradient_jump_inverted_bounded_beta}
  \xi_f^\mtext{MDGFM} \defeq \dt^{-1}{\jump{u}^{***}_f} + \sum_{k=1}^{d-1}\sum_{h \in \mathcal{F}^{\perp_k}(f)} w^{\perp_k}_{f,h} \roundpar{\dt^{-1}\jump{u}^{***}_h - \xi_h} \beta_f\ratio{f,h}\\ + (\beta_f-1)\squarepar{\dt^{-1}\roundpar{u^{g,***}_{h^g} - u^{l,***}_{h^l}} - \roundpar{\sgrad{g}_{h^g} - {\sgrad{l}_{h^l}}}}.
\end{multline}
The previously (when we used $\beta \equiv 1$) problematic term has now become $\beta_f\ratio{f,h}$. 
Note that whereas the reciprocal $\beta_f^{-1}$ appears in the redefinition of the jump interpolant in~\cref{eqn:poisson:jump_interpolant:jump_interpolant_beta}, the value $\beta_f$ itself appears in~\cref{eqn:poisson:mdgfm:impose_gradient_jump_inverted_bounded_beta}, thus showing that $\beta_f = 0$ is permitted.

Suppose that $\beta_f$ is a function of the absolute value of the face normal component of the interface normal
\begin{equation}\label{eqn:poisson:mdgfm:betaasfun}
  \beta_f \defeq \beta(\abs{\normalstag_f \cdot \+n_f}).
\end{equation}
We are now free to model the function $\beta(x)$.
First and foremost we require that $\beta_f\abs{\ratio{f,h}}$ is bounded at all times.
The ratio $\ratio{f,h}$ can be bounded by (with equality in 2D)
\begin{equation}
  \abs{\ratio{f,h}} = \frac{\abs{\normalstag_f \cdot \+n_h}}{\abs{\normalstag_f \cdot \+n_f}}  \le \frac{\sqrt{1-(\normalstag_f \cdot \+n_f)^2}}{\abs{\normalstag_f \cdot \+n_f}},
\end{equation}
where we made use of
\begin{equation}
  1 = \abs{\normalstag_f}^2_2 = (\normalstag_f \cdot \+n_f)^2 + \sum_{k=1}^{d-1} (\normalstag_f \cdot \+n_f^{\perp_k})^2,
\end{equation}
as well as $h \in \mathcal{F}^{\perp_k}(f)$ such that $\+n_h = \+n^{\perp_k}_f$ for some $k \in \{1, \ldots, d-1\}$.
Here $\+n^{\perp_k}_f$ denotes the $k$-th vector orthogonal to $\+n_f$.

Hence boundedness of $\beta_f\abs{\ratio{f,h}}$ is implied by boundedness of $\beta(x)\frac{\sqrt{1-x^2}}{x}$ for $x \in [0, 1]$.
Furthermore we note that the face normal component of the gradient jump should not be imposed at all if the interface normal is (nearly) orthogonal to the face normal ($\beta(x) \approx 0$ when $x \approx 0$), and finally we have no boundedness issues whenever the interface normal is (nearly) aligned with the face normal, and therefore the original interpolant should be obtained in such cases ($\beta(x) \approx 1$ when $x \approx 1$).
We propose to use
\begin{equation}\label{eqn:poisson:mdgfm:betafundef}
  \beta(x) \defeq 3x^2 - 2{x}^3,
\end{equation}
for which it holds that
\begin{equation}
  \beta(x)\frac{\sqrt{1-x^2}}{x} \le 0.87278\ldots, \quad \beta(0) = 0, \quad \beta'(0) = 0, \quad \beta(1) = 1, \quad \beta'(1) = 0,
\end{equation}
for $x \in [0, 1]$.

Due to the newly introduced term in~\cref{eqn:poisson:mdgfm:impose_gradient_jump_inverted_bounded_beta} the jump $\xi_f$ now also depends on the pressure, which implies that we must solve for $p$ and $\xi$ simultaneously. 
This makes sense because the interface tangential component of the gradient jump is not known a priori.

To summarise: we solve~\cref{eqn:overview:poisson:two} where the gradient $\sgrad{\pi}$ is given by~\cref{eqn:poisson:mdgfm:gfm} and the gradient jump $\xi$ is given by~\cref{eqn:poisson:mdgfm:impose_gradient_jump_inverted_bounded_beta}.
The latter definition of the gradient jump is a direct consequence of imposing~\cref{eqn:poisson:mdgfm:gradient_jump} using the jump interpolant defined by~\cref{eqn:poisson:jump_interpolant:jump_interpolant_beta} which, in turn, implicitly imposes continuity of the interface normal component of velocity, as stated in~\cref{eqn:intro:normal_smoothness}.

\subsection{Validation}
The pressure Poisson problem is considered (in 2D) where the exact pressure, which is unique up to a constant, is given by
\begin{equation}
  p = x - y + \chi^l\curlypar{\sigma\kappa + 40 \squarepar{(x-0.5)^2 + (y-0.5)^2 - R^2} \roundpar{x - y}},
\end{equation}
for $\sigma = 0.1$. 
The curvature $\kappa$ is approximated using the GHF method~\citep{Popinet2009}.
The liquid domain $\Omega^l$ is the interior of a circle with radius $R = 0.3$ centred in a unit square domain $\Omega = [0, 1]^2$.
An example approximate solution to the pressure Poisson problem is shown in~\cref{fig:poisson:validation:poisson:example}, where we have used a locally refined grid in order to illustrate that the proposed jump interpolant can also be used at grid refinement interfaces.

The `predictor' velocity field is given by
\begin{equation}
  \+u^{***} = \chi^l \begin{bmatrix}
    \partial_y \\ -\partial_x
  \end{bmatrix}
  \frac{\sin(2\pi ((x-0.5)^2 + (y-0.5)^2))}{2 \pi}
  + \frac{1}{\rho} \gradient p,
\end{equation}
such that the final velocity field $\+u = \+u^{***} - \frac{1}{\rho} \gradient p$ is divergence free and is continuous in the interface normal direction.
Note that the interface tangential component of the pressure gradient is given by (evaluated at the interface)
\begin{equation}
  \partial_\tau p^\pi = \tau_x - \tau_y,
\end{equation}
from which it follows that the scaled tangential derivative jump is in general non-zero 
\begin{equation}\label{eqn:poisson:validation:poisson:tangentjump}
  \jump{\frac{1}{\rho} \partial_\tau p} = \jump{\frac{1}{\rho}}(\tau_x - \tau_y).
\end{equation}
Recall that the GFM~\citep{Liu2000}, which resulted in the gradient jump given by~\cref{eqn:poisson:mdgfm:liujump}, neglects exactly this tangential derivative jump, and therefore the GFM is expected to be inconsistent for this test problem if $\jump{\frac{1}{\rho}} \neq 0$, or equivalently if $\ratio{\rho} \neq 1$, where the density ratio is denoted as $\ratio{\rho} \defeq \ratiofull{\rho}$.

\begin{figure}
  \captionbox{Approximate pressure using $\ratio{\rho} = 10^{-3}$ on a locally refined grid.
  The boundaries of the (potentially overlapping) gridblocks are shown, where each gridblock contains $8^2$ control volumes.
  \label{fig:poisson:validation:poisson:example} }
  [\twofigwidth]{
    \def\svgscale{1.3}
    \import{./inkscape/}{poisson_mdgfm.pdf_tex}
  }\hfill
  \captionbox{The $L^\infty$-norm of the velocity error for several density ratios. Here the solid and dashed lines are results using the MDGFM (solid) and GFM (dashed) method respectively.
  \label{fig:poisson:validation:poisson:convergence_g}}
  [\twofigwidth]{
    \def\tikzWidth{\textwidth*0.34}
    \def\tikzHeight{\textwidth*0.3}
    \inputtikzorpdf{poisson_divFree_PLVIRA_combog}
  }
\end{figure}
We consider a uniform mesh for which $h = 2^{-L}$, for $L = 4, 5, \ldots, 10$, and show the resulting relative error in the $L^\infty$-norm of the velocity field, for several values of the density ratio $\ratio{\rho}$, in~\cref{fig:poisson:validation:poisson:convergence_g}.
The results show that the MDGFM is first-order accurate in terms of the gradient, and thus the resulting velocity field, regardless of the density ratio.
The accuracy of the distance function is essential for convergence: we found that using a PLIC reconstruction resulted in an insufficiently accurate distance function which resulted in a lack of convergence for $\ratio{\rho} = 10^{-3}$.
The results obtained here use a piecewise parabolic reconstruction of the interface (the PLVIRA method from~\citet{Remmerswaal2021} was used).

Since the tangential derivative jump given by~\cref{eqn:poisson:validation:poisson:tangentjump} does not vanish for $\ratio{\rho} \neq 1$, we find that the GFM results in a convergent velocity field only if the fluids are of equal density.
This result confirms that the GFM is not suitable for imposing a jump condition of the form~\cref{eqn:poisson:gradjump} and we therefore proceed with using our proposed MDGFM.

\subsection{The conservative nondimensional distance function}
In the derivation of the GFM~\citep{Liu2000} the distance function $\stagger\phi$ is indeed an actual distance function, as illustrated in~\cref{fig:poisson:gfm:distance_fun}.
Besides the local consistency of the gradient operator, global conservation properties are also of interest.
To this end we compute the contribution to linear momentum by the (MD)GFM gradient operator
\begin{equation}\label{eqn:poisson:momentum_cont}
  \sum_\setextrusion{F} \+n\mean{\volfracstag \rho \sgrad{}} = \sum_\setextrusion{F} \+n \frac{\mean{\volfracstag \rho} (\gradh p + \zeta \gradh \chi^l) + {\mean{\volfracstag\rho \gfmthing}\xi}}{\mean{\stagger\phi\rho}}.
\end{equation}
Whereas usually, when considering the \onefluid formulation in the absence of a value jump, we would find
\begin{equation}\label{eqn:poisson:momentum_usual}
  \sum_\setextrusion{F} \+n \gradh p \stackrel{\eqref{eqn:notation:adjointness}}{=} -\sum_{\mathcal{C}} \divh\+n = \+0,
\end{equation}
we now find that none of the contributions in the right-hand side of~\cref{eqn:poisson:momentum_cont} vanish.
The fact that we do not end up with a result such as~\cref{eqn:poisson:momentum_usual} is due to the fact that the GFM uses a definition of mass, given by $\mean{\stagger\phi \rho}$, which is different from the definition of mass used by the convection operator, which was given by $\mean{\volfracstag\rho}$.
Therefore we propose to use a novel definition of distance, given by $\stagger\phi^\pi = \volfracstag^\pi$, such that the contribution to linear momentum is now given by
\begin{equation}
  \sum_\setextrusion{F} \+n \mean{\volfracstag^{(n+1)}\rho(u^{(n+1)} - u^{***})} = -\dt \sum_\setextrusion{F} \+n \mean{\volfracstag \rho \sgrad{}} = -\dt \sum_\setextrusion{F} \+n \zeta \gradh\chi^l. 
\end{equation}
Hence only the value jump $\zeta$ (potentially) contributes to the change in linear momentum.

Simply using a different distance function affects the approximation accuracy, however we note that~\cref{eqn:intro:normal_smoothness} is still consistently imposed, and this was exactly the purpose of developing the MDGFM.
We will use this alternative choice for the distance function, and we refer to the resulting methods as the conservative GFM (CGFM) and the conservative MDGFM (CMDGFM) respectively.

To summarise, the CMDGFM gradient operator is given by
\begin{equation}
  \sgrad{\pi} = \frac{\gradh p + \zeta \gradh\chi^l + \gfmthing^\pi \xi}{\mean{\volfracstag \rho}}, \quad
  \gfmthing^l = -\volfracstag^g \rho^g, \quad 
  \gfmthing^g = \volfracstag^l \rho^l,
\end{equation}
where the jump $\xi$ is found by imposing~\cref{eqn:poisson:jump_interpolant:jump_interpolant_beta}, which results in~\cref{eqn:poisson:mdgfm:impose_gradient_jump_inverted_bounded_beta}, with $\beta \in \mathcal{F}^h$ given by~\cref{eqn:poisson:mdgfm:betaasfun,eqn:poisson:mdgfm:betafundef}.

\subsection{Comparison to the \onefluid formulation}
We now discuss the discretisation of the Poisson problem for the \onefluid formulation, as given by~\cref{eqn:overview:poisson:one}.
For the \onefluid formulation we use the mass weighted average of the CGFM gradient operator, which we denote as
\begin{equation}
  \sgradone \defeq \frac{\mean{\volfracstag \rho \sgrad{}}}{\mean{\volfracstag \rho}} = \frac{\gradh p + \zeta \gradh\chi^l}{\mean{\volfracstag \rho}},
\end{equation}
and no longer depends on the gradient jump $\xi$.
Using this gradient operator in~\cref{eqn:overview:poisson:one} results in the following velocity update
\begin{equation}\label{eqn:poisson:onefluid:onefluid}
  \onevelo^{(n+1)} = \onevelo^{***} - \dt \frac{\gradh p + \zeta \gradh\chi^l}{\mean{\volfracstag \rho}}.
\end{equation}

Interestingly, when using $\xi = \jump{u}^{***}/\dt$ in the \twofluid formulation, which results in the complete removal of the velocity jump~\citep{Desjardins2010,Vukcevic2017}, we find that the velocity field after the projection step is given by
\begin{equation}\label{eqn:poisson:onefluid:twofluid_removejump}
  u^{\pi,(n+1)} = u^{\pi,***} - \dt\frac{\gradh p + \zeta \gradh\chi^l + \gfmthing^\pi \jump{u^{***}}/\dt}{\mean{\volfracstag \rho}}  = \frac{\mean{\volfracstag \rho u^{***}}}{\mean{\volfracstag \rho}} - \dt\frac{\gradh p + \zeta \gradh\chi^l}{\mean{\volfracstag \rho}},
\end{equation}
where we emphasise that, due to the removal of the jump, the right-hand side no longer depends on the phase $\pi$.
Note that the right-hand side of~\cref{eqn:poisson:onefluid:twofluid_removejump} exactly coincides with the \onefluid formulation given in~\cref{eqn:poisson:onefluid:onefluid}, if we recall that the velocity $\onevelo$ was defined in terms of the velocities $u^\pi$ as
\begin{equation}\label{eqn:poisson:compare_to_one:onevelo}
  \onevelo \defeq \frac{\mean{\volfracstag \rho u}}{\mean{\volfracstag \rho}}.
\end{equation}
Furthermore, the numerical models for diffusion and gravity, which will be discussed in~\cref{sec:diffusion,sec:gravity} respectively, do not alter the jump.

It follows that the \onefluid formulation can alternatively be obtained by simply letting $\xi = \jump{u}^{***}/\dt$ within the \twofluid formulation.
The difference being that usually we first impose continuity of the velocity field $\jump{\+u} = 0$ and subsequently manipulate the analytical equations \emph{before} discretisation, whereas now we see that we can also interpret the \onefluid model as a special instance of the \twofluid model, where we have substituted $\jump{u} = 0$ \emph{after} discretisation.
This is a very important property, since, when comparing our proposed one- and \twofluid models in~\cref{sec:results}, it is exactly the influence of only the jump condition, as given by~\cref{eqn:intro:normal_smoothness}, that we want to consider.
Such an approach is in the same vein as~\citet{Veldman1990}: \emph{discretise first, substitute next}.

  \section{Diffusion}\label{sec:diffusion}
Thus far we have addressed~\cref{chal:mass_mom,chal:continuity} and therefore we now have a sharp model for the two-phase Euler equations. 
We however do not completely want to neglect viscosity, and in particular if the mesh is sufficiently fine, we want our \twofluid model to converge to the viscous \onefluid model which has a continuous velocity field (as given by~\cref{chal:refinement}).
This will be achieved by the diffusion model, but before we discuss this, we will introduce the diffusion model for the \onefluid formulation.

\subsection{\Onefluid formulation}
For the discretisation of diffusion we choose to use a diffuse (i.e. not sharp) approach, where we simply let the dynamic viscosity be a weighted (using the volume fraction) average of the dynamic viscosities per phase, and subsequently use standard single-phase operators for the discretisation of the strain tensor as well as divergence operator.
Using the implicit midpoint rule for time integration results in the following stress tensor $\tvar^\mu$ (see also~\cref{eqn:overview:diff:one})
\begin{equation}\label{eqn:diffusion:stress_tensor}
  \tvar^\mu \defeq \bar\mu^{(n+1)} \symmgrad\roundpar{\onevelo^* + \onevelo^{**}}, \quad \symmgrad u \defeq \frac{\stagger \gradh u + (\stagger \gradh u)^T}{2},
\end{equation}
where the gradient and strain\footnote{The diagonal components of the strain tensor are located on the centroids of the centred control volumes, whereas the off-diagonal components are located on the edges of the centred control volumes.
In particular, the $i,j$-th component, for $i \neq j$, is located on the edge $e$ whose tangent $\+t_e$ is normal to both the $i$-th as well as $j$-th coordinate direction.
It follows that the definition of the symmetric part of the gradient tensor $\stagger \gradh u$, resulting in the strain tensor $\symmgrad u$, does not require interpolation (on a rectilinear mesh) since the $i,j$-th component is collocated with the $j,i$-th component.} operator are denoted by $\stagger \gradh, \symmgrad: \mathcal{F}^h \rightarrow \mathcal{G}^h$, and we remark that $(\stagger \gradh u)^T$ is not to be confused with the operator adjoint $\stagger \gradh^T$.
The gradient operator is defined as the negative adjoint of $\stagger \divh$: $\stagger \gradh \defeq -\stagger \divh^T$
\begin{equation}\label{eqn:notation:notation:gradient_stag}
  (\stagger \gradh u)_g = -\frac{1}{\stagger h_g}\sum_{f \in \mathcal{F}^\omega(g)} \stagger\orientation_{f,g} u_f,
\end{equation}
where $\mathcal{F}^\omega(g)$ denotes the set of neighbouring staggered control volumes which have $g$ as part of their boundary and $\stagger h_g$ is such that $\stagger \gradh$ is exact for linear functions.

The volume fraction at the edges of a control volume are interpolated from the centred volume fractions, thereby allowing us to compute the spatially varying dynamic viscosity $\bar \mu \in \mathcal{G}^h$ as the weighted geometric average of the dynamic viscosity per phase
\begin{equation}\label{eqn:diffusion:generic_mean}
  \bar\mu \defeq (\mu^l)^{\volfrac^{l}} (\mu^g)^{\volfrac^{g}}.
\end{equation}
The weighted arithmetic average of the dynamic viscosity was found to lead to continuity of the velocity too quickly under mesh refinement for a viscous test problem using the \twofluid formulation, and we have therefore chosen to use the weighted geometric average instead.
We also use the weighted geometric average in the \onefluid formulation to ensure that any observed differences, between the one- and \twofluid formulations, are in fact due to the velocity jump condition at the interface, and not due to a different viscosity averaging.

\subsection{\Twofluid formulation}
In developing a \twofluid generalisation for the diffusion model we require the following: total (i.e. the sum over the phases) linear momentum should be conserved, and if the velocity field is continuous we want to recover exactly the same diffusion model as used for the \onefluid formulation.

A simple way to ensure that both of these conditions hold is to let the diffusion operator act on the equivalent velocity field one would obtain in the \onefluid model.
That is, we define (cf.~\cref{eqn:poisson:compare_to_one:onevelo})
\begin{equation}
  \onevelo^{*} \defeq \frac{\mean{\volfracstag^{(n+1)} \rho u^{*}}}{\mean{\volfracstag \rho}^{(n+1)}},
\end{equation}
and compute $\onevelo^{**}$ using the implicit midpoint rule as given by~\cref{eqn:overview:diff:one}, where $\tvar^\mu$ is defined by~\cref{eqn:diffusion:stress_tensor}.
Provided with the stress tensor $\tvar^\mu$, the velocity $u^{\pi,**}$ can be computed according to~\cref{eqn:overview:diff:two}.

This model is not based on consistent and sharp modelling of the diffusive stresses at the interface, but does result in the desired behaviour: the velocity discontinuity vanishes under mesh refinement for viscous flows. 
This will be demonstrated in~\cref{sec:results:stokes}.

  \section{Gravity}\label{sec:gravity}
The nonconservative gravity force (where $\chi^\pi(\+x) \in \{0, 1\}$ is the phase indicator, and $\+g$ the gravitational acceleration)
\begin{equation}\label{eqn:gravity:force}
  \+F = \mean{\chi \rho} \+g,
\end{equation}
is deceptively simple, but its discretisation is nontrivial nonetheless.
Besides consistency, well-balancedness is of importance, which refers here to the existence of a steady state solution.
That is, we consider the existence of some pressure $p^\dagger$, such that
\begin{equation}
  \gradient p^{\dagger} = \+F.
\end{equation}
Here we consider a steady state in the absence of surface tension, and thus consider a linear interface given by the zero level set of
\begin{equation}\label{eqn:gravity:levelset}
  q(\+x) = \+N \cdot \+x - S, \quad \+N = -\frac{\+g}{\abs{\+g}_2},
\end{equation}
where we permit the gravitational acceleration $\+g$ to be in any direction.
It follows that the following pressure 
\begin{equation}\label{eqn:gravity:steady_pressure}
  p^{\dagger}(\+x) = -\mean{\chi(\+x) \rho} \abs{\+g}_2 q(\+x),
\end{equation}
results in an exact balance between pressure and the gravity force.
A discretisation of the gravity force~\cref{eqn:gravity:force} for which such an exact balance always (for any $\+g$) exists is called well-balanced~\citep{Popinet2018}.

Note that the pressure given by~\cref{eqn:gravity:steady_pressure} is continuous, despite the discontinuous density, since $q$ vanishes at the interface.

\subsection{Overview}
Integration over a staggered control volume of the face normal component of the gravity force~\cref{eqn:gravity:force} results in
\begin{equation}\label{eqn:gravity:overview:straight}
  \frac{1}{|\omega_f|}\+n_f \cdot \integral{\omega_f}{\+F}{V} = \mean{\volfracstag \rho}_f \+n_f \cdot \+g,
\end{equation}
where we make use of $|c|\volfrac^{l}_c = \integral{c}{\chi^l(\+x)}{V}$ (cf.~\cref{eqn:overview:fractiondef}).
This is the most straightforward discretisation of the gravity force.
It turns out, however, that this model is ill-balanced (i.e. not well-balanced) whenever the interface is not aligned with the grid.

The model can be made well-balanced by modifying the definition of mass that appears inside the gravity force $\+F$~\citep{Wemmenhove2015a}.
The resulting `gravity consistent averaging' of the density is such that the discrete equivalent of $\curl \+F = \+0$ holds in a steady state, which implies that $\+F = \gradient p^*$ for some scalar potential $p^*$. 
Modification of the mass however implies that the mass appearing in the pressure gradient differs from the one used in the definition of momentum, and thus linear momentum is no longer conserved by the pressure gradient, which is undesirable.

Gravity can also be modelled by analytically subtracting the hydrostatic pressure from the pressure~\citep{Popinet2018}
\begin{equation}
  P^\pi = p^\pi - \rho^\pi \+g \cdot \+x,
\end{equation}
which results in a modified jump condition on the newly defined reduced pressure
\begin{equation}\label{eqn:gravity:reduced_pressure}
  \jump{P} = \jump{p} - \jump{\rho} \+g \cdot \+x = -\sigma\kappa - \jump{\rho} \+g \cdot \+x, \quad \+x \in I.
\end{equation}
This jump condition can then be taken into account as a value jump in the C(MD)GFM method.
This model is well-balanced~\citep{Popinet2018} and reproduces the steady state~\cref{eqn:gravity:steady_pressure} exactly provided that the globally linear interface is exactly reconstructed.

Each of the three methods (firstly~\cref{eqn:gravity:overview:straight}, secondly the gravity consistent method from~\citet{Wemmenhove2015a} and thirdly the reduced pressure approach given by~\cref{eqn:gravity:reduced_pressure}) are consistent, but the first lacks well-balancedness, and all of them lack conservation properties.
In what follows we therefore propose a new well-balanced and mimetic gravity model, which yields the correct contribution to linear momentum, as well as a correct exchange between kinetic and gravitational potential energy.

\subsection{A mimetic gravity model}
We propose a new gravity model based on the consistent evolution of the gravitational potential energy density $\gravpot \in \mathcal{C}^h$, which we define as
\begin{equation}\label{eqn:gravity:mimetic:potential}
  \gravpotc \defeq -\frac{\+g \cdot \mean{\rho \+M_{1,c}}}{|c|},
\end{equation}
where $\+M_{1,c}^\pi$ is the first moment of the $\pi$-phase
\begin{equation}
  \+M_{1,c}^\pi \defeq \integral{c^\pi}{\+x}{V}.
\end{equation}

We consider the \onefluid formulation, and impose that the exchange of kinetic and gravitational potential energy balances exactly. 
That is, we impose
\begin{equation}\label{eqn:gravity:mimetic:energy_balance}
  \frac{d}{dt} \sum_{\mathcal{C}} \gravpot + \sum_\setextrusion{F} \onevelo F = 0,
\end{equation}
where we recall that the integration functionals are defined in~\cref{eqn:notation:integration}.
Subsequently we derive an expression for the, thus far unknown, gravity model $F$.

It follows that we must model the temporal evolution of the gravitational potential energy density, given by~\cref{eqn:gravity:mimetic:potential}, for which the only time dependent part is given by the first moment.
The time derivative of the first moment is given by
\begin{equation}\label{eqn:gravity:mimetic:moment_evolution}
   \frac{d}{dt} \+M_{1,c}^\pi = \frac{d}{dt} \integral{c^\pi}{\+x}{V} = \integral{I_c}{\+x u_{\eta^\pi}}{S},
\end{equation}
where Reynolds' transport theorem~\citep{Reynolds1903} was used.
Subsequently, we approximate the interface integral on the right-hand side by
\begin{equation}\label{eqn:gravity:mimetic:rhs_approx}
  \integral{I_c}{\+x u_{\eta^\pi}}{S} \approx\+x^I_c \integral{I_c}{u_{\eta^\pi}}{S},
\end{equation}
where $\+x^I \in [\mathcal{C}^h]^d$ denotes the centroid of the interface within the corresponding control volume.
If the control volume $c$ does not contain part of the interface then $\+x^I_c$ equals the centroid of the control volume instead: $\+x^I_c = \+x_c$ if $I_c = \emptyset$.
For a solenoidal velocity field we find that
\begin{equation}\label{eqn:gravity:mimetic:divergence_approx}
  \integral{I_c}{u_{\eta^\pi}}{S} = -\integral{\partial c^\pi \setminus I_c}{u_\eta}{S} \approx -|c|\divh(a^\pi \onevelo)_c,
\end{equation}
where the divergence operator $\divh$ is as defined in~\cref{eqn:notation:divergence}.
Combining~\cref{eqn:gravity:mimetic:moment_evolution,eqn:gravity:mimetic:rhs_approx,eqn:gravity:mimetic:divergence_approx} shows that we can model the temporal evolution of the first moment in the following way
\begin{equation}\label{eqn:gravity:mimetic:moment_model}
  \frac{d}{dt} \+M_{1,c}^\pi = -|c|\divh(a^\pi \onevelo)_c\+x^I_c.
\end{equation}

Integration of the time derivative of~\cref{eqn:gravity:mimetic:potential}, using the approximation given by~\cref{eqn:gravity:mimetic:moment_model}, results in the rate of change of the total gravitational potential energy
\begin{equation}
  \frac{d}{dt} \sum_{\mathcal{C}} \gravpot = \sum_{\mathcal{C}}  \divh(\mean{a \rho}\onevelo) \+g \cdot\+x^I = -\sum_\setextrusion{F}  \mean{a \rho} \onevelo \gradh(\+g \cdot\+x^I),
\end{equation}
where we have utilised the skew adjointness relation between the divergence and gradient operator, see~\cref{eqn:notation:adjointness}.
At this point we impose~\cref{eqn:gravity:mimetic:energy_balance} from which we define the mimetic gravity model (MGM) as
\begin{equation}\label{eqn:gravity:model}
  F \defeq \mean{a \rho} \gradh(\+g \cdot\+x^I).
\end{equation}

Well-balancedness of the MGM is stated in the following lemma for which a proof can be found in~\cref{sec:app:gravity}.
\begin{restatable}[Well-balancedness of the MGM]{lemma}{lemmagravitybalance}\label{lem:app:gravity:primal}
  If the interface is given by the zero level set of the function $q$ defined in~\cref{eqn:gravity:levelset} then the MGM, as given by~\cref{eqn:gravity:model}, is well-balanced
  \begin{eqnarray}\label{eqn:app:gravity:steady_state}
    \gradh p^\dagger = F,
  \end{eqnarray}
  where the steady state pressure is given by
  \begin{eqnarray}\label{eqn:app:gravity:mgm_sol}
    p^\dagger = -\mean{\chi \rho} \abs{\+g}_2 q(\+x^I).
  \end{eqnarray}
\end{restatable}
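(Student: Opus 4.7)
The plan is to verify $(\gradh p^\dagger)_f = F_f$ face by face, since both sides are local to $f$ and its two neighbours $\mathcal{C}(f) = \{c_1, c_2\}$. Substituting the definitions of $\gradh$, $F$, and $p^\dagger$ reduces the claim to an algebraic identity between $\mean{a\rho}_f$, the cell-wise masses $\mean{\chi\rho}_{c_i}$, and the points $\+x^I_{c_i}$. The key rewriting is $\abs{\+g}_2 q(\+x) = -\+g\cdot\+x - \abs{\+g}_2 S$, which expresses both sides entirely in terms of inner products of $\+g$ with these points and collapses the $\abs{\+g}_2 S$ contributions across a face.

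The central observation is that, because the interface is the zero set of the linear function $q$ and $\+x^I_c \in I_c$ whenever $I_c \neq \emptyset$, every mixed cell $c$ satisfies $q(\+x^I_c) = 0$. Hence $p^\dagger_c = 0$ and $\+g \cdot \+x^I_c = -\abs{\+g}_2 S$ in every mixed cell. For a pure cell in phase $\pi$, by construction $\+x^I_c = \+x_c$ and $\mean{\chi\rho}_c = \rho^\pi$, and any face shared with a neighbour on the same side of the interface inherits $\mean{a\rho}_f = \rho^\pi$.

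The verification then splits on the phase status of $c_1, c_2$. If both are pure of the same phase $\pi$, then $f$ is also pure in phase $\pi$, and the required identity reduces to the trivial fact that $\gradh$ exactly differentiates the affine function $c \mapsto -\rho^\pi \abs{\+g}_2 q(\+x_c)$. If both are mixed, then $p^\dagger_{c_1} = p^\dagger_{c_2} = 0$ and $\+g \cdot (\+x^I_{c_2} - \+x^I_{c_1}) = 0$, so both sides vanish. The substantive case is when $c_1$ is pure (in phase $\pi$) while $c_2$ is mixed; linearity and convexity force $a^\pi_f = 1$, hence $\mean{a\rho}_f = \rho^\pi$, and combining $p^\dagger_{c_2} = 0$, $\+g \cdot \+x^I_{c_2} = -\abs{\+g}_2 S$, and $\+x^I_{c_1} = \+x_{c_1}$ yields the identity $h_f F_f = h_f (\gradh p^\dagger)_f = -\rho^\pi(\+g\cdot\+x_{c_1} + \abs{\+g}_2 S)$ by a short calculation.

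The main obstacle I expect is the geometric assertion underlying the last case: that a planar interface which cuts the convex cell $c_2$ but leaves $c_1$ entirely in phase $\pi$ cannot cross the common face $f$, so that $f$ must itself be pure in phase $\pi$. This follows from the linearity of $q$ together with the fact that $f = \overline{c_1} \cap \overline{c_2}$, but deserves a clean statement. Beyond that, only the degenerate configurations in which $q$ vanishes at a cell centroid or exactly on a face need a brief handling, which can be done by continuity in the interface offset $S$, reducing them to the mixed-mixed case.
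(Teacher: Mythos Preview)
Your proposal is correct and follows essentially the same route as the paper's proof: a three-way case split on whether $c_1$ and $c_2$ contain the interface, using that $q(\+x^I_c)=0$ for mixed cells and $\+x^I_c=\+x_c$ for pure cells, together with the geometric observation that when exactly one neighbour is pure the shared face inherits that phase so $a^\pi_f\in\{0,1\}$. The only organisational difference is that the paper first isolates the per-phase identity $(\gradh(\chi^\pi q(\+x^I)))_f = a^\pi_f(\gradh q(\+x^I))_f$ and then sums over $\pi$, whereas you work directly with the combined quantities $\mean{\chi\rho}$ and $\mean{a\rho}$; the underlying computation is the same.
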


Note that the derivation ensures that the energy exchange is exact in semi-discrete form, and in~\cref{sec:validation:gravity} we show that by mimicking the correct energy exchange we obtain a much improved exchange between gravitational potential and kinetic energy, when compared to the gravity model based on the reduced pressure.
In~\cref{sec:app:gravity} we moreover show that the MGM yields the correct contribution to linear momentum.
 
  \section{Validation}\label{sec:results}
We will now validate the proposed one- and \twofluid models by applying them to a multitude of two-phase flow problems.
In particular, the well-balancedness of the models is demonstrated in~\cref{sec:results:balance}, whereas the simulation of a capillary wave, a Kelvin--Helmholtz instability and a Rayleigh--Taylor instability is considered in~\cref{sec:results:cap_wave,sec:results:kh,sec:results:rt} respectively.
Subsequently we consider a Plateau--Rayleigh instability in~\cref{sec:results:pr}, followed by the simulation of breaking waves in~\cref{sec:results:stokes,sec:results:impact}.
Both formulations have been implemented in the ComFLOW~\citep{kleefsman2005volume,Wemmenhove2015a,VanderPlas2017} two-phase Navier--Stokes solver.
A block-based adaptive grid is used, where we consider several types of adaptive mesh refinement (AMR).
The simplest form of mesh refinement that we consider refines only near the phase interface: a mesh block $\mathcal{B} \subset \mathcal{C}$ is refined (i.e. every control volume $c$ is split into $2^d$ control volumes) if this block contains both phases
\begin{equation}
  \max_{c\in\mathcal{B}}\volfrac^\pi_c \ge \varepsilon^{\volfrac}, \quad\text{for } \pi = l \text{ and } \pi = g,
\end{equation}
where e.g. $\varepsilon^{\volfrac} = 10^{-6}$.
If this criterion no longer holds true for some block $\mathcal{B}$ then it is coarsened.
We refer to this type of mesh refinement as $\volfrac\amr{\level}$-AMR, where $\level$ is the maximum refinement level of this criterion on top of some underlying uniform grid.
The underlying uniform grid has mesh width $\basemesh$, and the maximum refinement level results in $h = 2^{-\level}\basemesh$.
Whenever blocks are refined and/or coarsened it is ensured that the difference in refinement level between neighbouring blocks is at most one.
More advanced refinement criteria will be introduced later on when they are needed.

For validation we will frequently compare (analytical) reference solutions to approximate numerical solutions, and to this end we will denote an approximation of some quantity $\varphi$ as $\approximate{\varphi}$.
We will often study convergence under mesh refinement $h \rightarrow 0$, where time step refinement $\dt \rightarrow 0$ is implied by the time step constraints~\cref{eqn:overview:onefluid:dr:cfl,eqn:overview:grav_timestep,eqn:overview:st_timestep} discussed in~\cref{sec:overview}.

The density and dynamic viscosity ratio are denoted by
\begin{equation}
  \ratio{\rho} \defeq \frac{\rho^g}{\rho^l}, \quad \ratio{\mu} \defeq \frac{\mu^g}{\mu^l}.
\end{equation}

\subsection{Well-balancedness}\label{sec:results:balance}
We start by confirming that the \twofluid model has the same steady state solutions as the \onefluid model.
The existence of a steady state solution, as discussed in e.g.~\cref{sec:gravity}, does not imply its stability, and therefore such a validation really is nontrivial.

\subsubsection{Surface tension}
Surface tension is modelled using the pressure value jump condition in~\cref{eqn:poisson:mdgfm:gfm}, where the value jump is given by the Young--Laplace equation~\eqref{eqn:poisson:valjump}.
The curvature is approximated using the generalised height-function (GHF) method~\citep{Popinet2009}.
A floating droplet (or circle in 2D) of radius $R$ in the absence of gravity yields a steady state solution to the two-phase Navier--Stokes equations, where the steady state pressure $p^\dagger$ is piecewise constant (as follows from the Young--Laplace equation~\eqref{eqn:poisson:valjump}) and given by
\begin{equation}
  p^\dagger = \chi^l \sigma \frac{d-1}{R}.
\end{equation}
For an exactly computed curvature, given by $\kappa = \frac{d-1}{R}$, we find that $p = p^\dagger$ yields an exact balance.
Since our curvature is approximated using the GHF method, we don't expect an immediate and exact balance.
However, via viscous dissipation an exact balance can possibly be reached, but this depends on the numerical stability of the steady state.

We simulate a floating droplet using $\volfrac\amr{2}$-AMR with an interface resolution of $h / R \approx 1/13$, and compute the kinetic energy as function of time
\begin{equation}
  E_k \defeq  \half \sum_\setextrusion{F} \mean{\volfracstag \rho u^2}.
\end{equation}
Several Laplace numbers are considered
\begin{equation}\label{eqn:results:laplaceR}
  \laplacenr = \frac{\sigma \rho^l R}{(\mu^l)^2},
\end{equation}
where the density and dynamic viscosity ratio are given by $\ratio{\rho} = \ratio{\mu}^2 = 10^{-3}$, which ensures that the Laplace number is equal for both phases.
Time is nondimensionalised using the oscillation period $T_\sigma$ for an inviscid droplet of mean radius $R$ with infinitesimal harmonic perturbations of wavenumber $k$.
The oscillation period depends on the dimensionality of the droplet, and is given by~\citep[Art. 273 \& 275]{Lamb1932}
\begin{equation}
  T_{\sigma,\mtext{2D}}(k) = 2\pi \sqrt{\frac{\squarepar{\rho^l + \rho^g}R^3}{\sigma k(k+1)(k-1)}}, \quad
  T_{\sigma,\mtext{3D}}(k) = 2\pi \sqrt{\frac{\squarepar{(k+1) \rho^l + k \rho^g}R^3}{\sigma k(k+1)(k-1)(k+2)}}.
\end{equation}

\begin{figure}
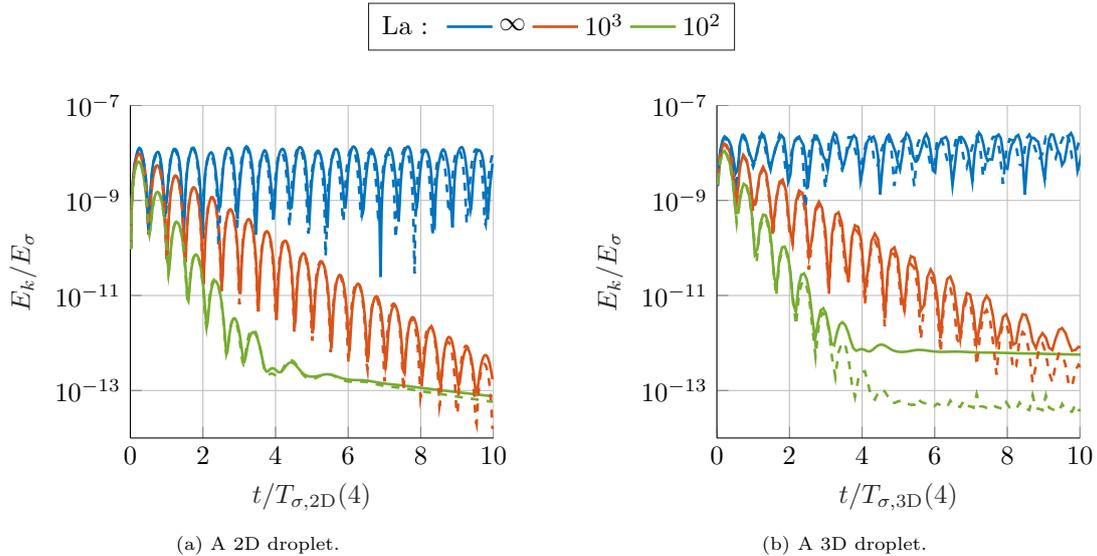

  \centering
  \inputtikzorpdf{floating_droplet_legend}

  \subcaptionbox{A 2D droplet.
  \label{fig:results:floating_droplet2d_r2}}
  [\twofigwidth]{
    \def\tikzWidth{\textwidth*0.325}
    \def\tikzHeight{\textwidth*0.3}
    \inputtikzorpdf{floating_droplet2d_r2}
  }\hfill
  \subcaptionbox{A 3D droplet.
  \label{fig:results:floating_droplet3d_r2}}
  [\twofigwidth]{
    \def\tikzWidth{\textwidth*0.325}
    \def\tikzHeight{\textwidth*0.3}
    \inputtikzorpdf{floating_droplet3d_r2}
  }
  \caption{Kinetic energy evolution of a floating droplet with $\ratio{\rho} = 10^{-3}$ for several Laplace numbers using $\volfrac\amr{2}$-AMR with $h / R \approx 1 / 13$.
  The solid and dashed lines correspond to the two- and \onefluid respectively.}
  \label{fig:results:floating_droplet}
\end{figure}
The results are shown in~\cref{fig:results:floating_droplet}, where we have included results from the \onefluid model for comparison.
We note that the ability in reaching a steady state solution is not affected by the new jump condition (given by~\cref{eqn:intro:normal_smoothness}) used in the \twofluid model.
Furthermore, we find stable oscillations of the correct frequency for $\laplacenr = \infty$, where we moreover correctly find that the kinetic energy does not dissipate due to the absence of viscous dissipation.

\subsubsection{Gravity}\label{sec:validation:gravity}
Gravity can be exactly balanced by a hydrostatic pressure, provided that the interface is flat (in the absence of surface tension) and with an interface normal which opposes the direction of gravity.
See also~\cref{fig:results:rotated_wave}, with $\delta_0 = 0$.
\Cref{lem:app:gravity:primal} implies that the MGM exactly yields this balance.
Well-balancedness however does not imply numerical stability of this steady state solution and therefore we test the stability of such a solution using a simulation where the initial condition is \emph{near} the steady state, i.e., we let $\delta_0 > 0$ in~\cref{fig:results:rotated_wave}.

\begin{figure}
  \captionbox{Initial condition of the rotated sinusoidal interface of wavelength $\lambda$ and amplitude $\waveamplitude_0$, as given by~\cref{eqn:results:gravity:initial_interface}.\label{fig:results:rotated_wave}}
  [0.35\textwidth]{
    \centering
    \import{inkscape/}{sf_gravity_validation_sloped.pdf_tex}
  }\hfill
  \captionbox{Initial condition of the sinusoidal interface of wavelength $\lambda$ and amplitude $\waveamplitude_0$ on a periodic domain, which is used in~\cref{sec:results:cap_wave,sec:results:rt,sec:results:kh}.
  Note that for the Rayleigh--Taylor test case we swap the role of the gas and liquid such that the heavier of the two fluids is on top.
  \label{fig:results:standing_wave}}
  [0.6\textwidth]{
    \centering
    \import{inkscape/}{standing_wave.pdf_tex}
  }
\end{figure}
We consider a square domain $\Omega = [0, 2\pi]^2$ with an initially perturbed fluid interface given by
\begin{equation}\label{eqn:results:gravity:initial_interface}
  \+N \cdot \+x = 4 + \waveamplitude_0 \cos\squarepar{\frac{2\pi \+T \cdot \+x}{\lambda}}, \quad \+T \perp \+N,
\end{equation}
where $\waveamplitude_0 = 10^{-2}$ and $\lambda$ denotes the wave length
\begin{equation}
  \lambda = \frac{2\pi}{\sin\vartheta}.
\end{equation}
The angle $\vartheta = 3\pi/8$ determines the normal direction of the unperturbed interface, see also~\cref{fig:results:rotated_wave}.
The simulations are performed using $\volfrac\amr{2}$-AMR with $h / \lambda \approx 1/35$.
We consider several values of the Galilei number
\begin{equation}
  \galilei = \frac{\abs{\+g}_2 \lambda^3 (\rho^l)^2}{(\mu^l)^2},
\end{equation}
where we let the density and dynamic viscosity ratio be given by $\ratio{\rho} = \ratio{\mu} = 10^{-3}$, such that the Galilei number is equal for both phases.

\begin{figure}
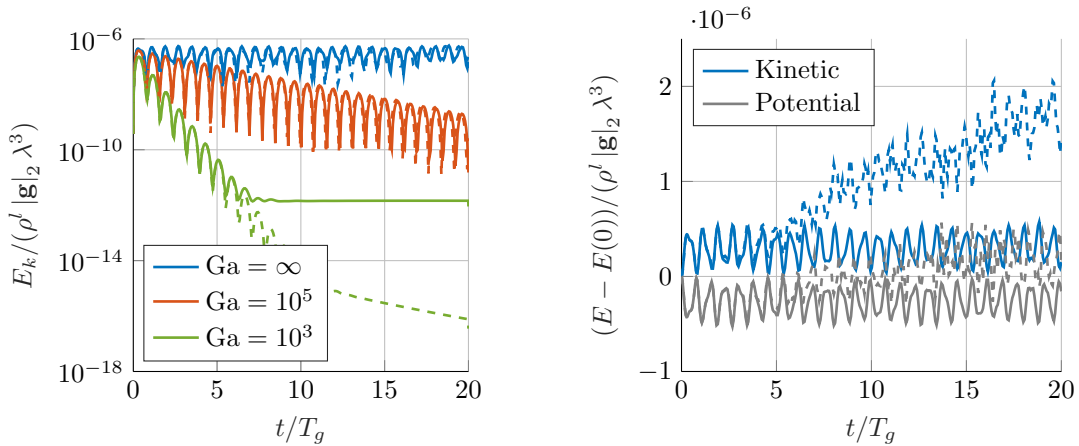

  \subcaptionbox{Evolution of kinetic energy for an initially perturbed interface for the one- and \twofluid model, corresponding to the dashed and solid lines respectively.\label{fig:poisson:results:gravity_balance_kinetic_N32}}
  [\twofigwidth]{
    \def\tikzWidth{\textwidth*0.3}
    \def\tikzHeight{\textwidth*0.3}
    \inputtikzorpdf{gravity_balance_kinetic_N32} 
  }\hfill
  \subcaptionbox{Evolution of kinetic as well as gravitiational potential energy for $\galilei = \infty$, where we consider the MGM (solid) and reduced pressure (dashed) gravity models with the \twofluid formulation.\label{fig:poisson:results:gravity_balance_exchange_N32}}
  [\twofigwidth]{
    \def\tikzWidth{\textwidth*0.34}
    \def\tikzHeight{\textwidth*0.3}
    \inputtikzorpdf{gravity_balance_exchange_N32} 
  }
  \caption{Energy evolution for testing the well-balancedness of the gravity models.
  Here the interface resolution is $h / \lambda \approx 1 / 35$.}
\end{figure}
The evolution of kinetic energy for several Galilei numbers is shown in~\cref{fig:poisson:results:gravity_balance_kinetic_N32}.
We observe that the results of the one- and \twofluid model hardly differ, except in the ability to reach the steady state for $\galilei = 10^3$.
It is unclear why this is the case.

In~\cref{fig:poisson:results:gravity_balance_exchange_N32} we show the evolution of kinetic as well as gravitational potential energy for the MGM as well as the gravity model based on the reduced pressure (see~\cref{eqn:gravity:reduced_pressure}), for $\galilei = \infty$.
Here we find, even though~\cref{eqn:gravity:mimetic:moment_model} is not satisfied exactly, that the MGM results in a much better exchange of kinetic and gravitational potential energy as when compared to the model based on the reduced pressure.

\subsection{Capillary wave}\label{sec:results:cap_wave}
We consider the motion of the interface between two viscous fluids in the presence of surface tension, for which an analytical solution for the amplitude $\waveamplitude(t)$ is found in~\citet{Prosperetti1981}.
See also~\cref{fig:results:standing_wave}.
The Laplace number is given by
\begin{equation}
  \mtext{La} = \frac{\sigma \rho^l \lambda}{(\mu^l)^2}.
\end{equation}
We vary the Laplace number, $\mtext{La} \in \{3\times 10^3, \infty\}$, and let $\ratio{\rho} = \ratio{\mu} = 1$ (no gravity).
The initial amplitude is taken as $\waveamplitude_0 = 10^{-2} \lambda$ with $\lambda = 2\pi$.
Time will be nondimensionalised using the capillary time scale
\begin{equation}\label{eqn:results:cap_wave:cap_timescale}
  T_\sigma = \sqrt{\frac{(\rho^l + \rho^g) \lambda^3}{2\pi \sigma}}.
\end{equation}

\begin{figure}
  \centering
  \inputtikzorpdf{capillary_wave_legend}

  \captionbox{Example temporal evolution of the interface amplitude using the \twofluid model (markers) and analytical solution (solid lines) for two values of the Laplace number.
  The resolution at the interface is given by $h = \lambda/64$.
  \label{fig:results:capillary_wave_example_Dr1_N64}}
  [\twofigwidth]{
    \def\tikzWidth{\textwidth*0.325}
    \def\tikzHeight{\textwidth*0.3}
    \inputtikzorpdf{capillary_wave_example_Dr1_N64}
  }\hfill
  \captionbox{Convergence of the interface amplitude in the nondimensionalised $L^2([0, T])$ norm for La $ = 3\times 10^3, \infty$ and $\ratio{\rho} = \ratio{\mu} = 1$.
  We compare the \twofluid model (solid line) to the \onefluid model (dashed line) as well as a result from~\citet{Popinet2009} (dotted line).
  \label{fig:results:capillary_wave_convergence_Dr1}}
  [\twofigwidth]{
    \def\tikzWidth{\textwidth*0.325}
    \def\tikzHeight{\textwidth*0.3}
    \inputtikzorpdf{capillary_wave_convergence_Dr1}
  }
\end{figure}
We use $\volfrac\amr{L}$-AMR with $\basemesh = 2^{-3}\lambda$.
In~\cref{fig:results:capillary_wave_example_Dr1_N64} we show the temporal evolution of the interface amplitude for the two Laplace numbers, using $L = 3$.
Furthermore, for each of the Laplace numbers, we vary the refinement level $L$ and for each approximate solution we compute the normalised $L^2$-norm (in time) of the error in the amplitude $\waveamplitude$. 
The time interval used in the $L^2$-norm is given by $T = \frac{25T_\sigma}{2\pi}$.
The results are shown in~\cref{fig:results:capillary_wave_convergence_Dr1} where we compare to the \onefluid model as well as a numerical result obtained from~\citet{Popinet2009}.

For the viscous case ($\laplacenr = 3\times 10^3$) we find that the results of the \twofluid model, \onefluid model as well as the result obtained from~\citet{Popinet2009} are of similar, and second-order, accuracy.
To the contrary, the inviscid case ($\laplacenr = \infty$) clearly shows the advantage of using the \twofluid model: here the \onefluid model does not converge because the velocity field is erroneously assumed to be continuous, whereas the \twofluid model still converges with second-order accuracy.

\subsection{Kelvin--Helmholtz instability}\label{sec:results:kh}
We now consider two fluids which initially have a velocity discontinuity tangential to the interface: $U^g = \+e_x \cdot \+u^g, U^l = \+e_x \cdot \+u^l$ with $\jump{U} \neq 0$, in the absence of gravity and viscosity, see also~\cref{fig:results:standing_wave}.
The Weber number, which is a nondimensional measure of the importance of inertia relative to that of capillary forces,  is given by
\begin{equation}
  \weber = \frac{\rho^g \rho^l \jump{U}^2 \lambda}{2\pi \sigma (\rho^g + \rho^l)}.
\end{equation}
For small amplitude perturbations of a given wavenumber $k = \frac{2\pi}{\lambda}$, the evolution of the interface amplitude is as follows~\citep[Art. 267 \& 268]{Lamb1932}
\begin{equation}\label{eqn:results:kh:amplitude}
  \waveamplitude(t) = {\frac{e^{-i\aleph_+ t} + e^{-i\aleph_- t}}{2}}\waveamplitude_0,
\end{equation}
where the frequency $\aleph_\pm$ can be found from a dispersion relation.
This dispersion relation yields an expression for the frequency $\aleph_\pm$ as function of the wavenumber, and is given by (recall that the capillary time scale is given by~\cref{eqn:results:cap_wave:cap_timescale})
\begin{equation}\label{eqn:results:kh:dispersion}
  \aleph_\pm = k \frac{\mean{\rho U}}{\mean{\rho}} \pm \frac{2\pi}{T_\sigma}\sqrt{1 - \weber}.
\end{equation} 
It follows that if $\weber > 1$ we find that $\Im \aleph_\pm \neq 0$ and therefore results in exponential growth of the interface amplitude given by~\cref{eqn:results:kh:amplitude}.
Here $\Re \aleph_\pm$ and $\Im \aleph_\pm$ denote the real and imaginary part of $\aleph_\pm$ respectively.

We use $\volfrac\amr{L}$-AMR with $\basemesh = 2^{-3}\lambda$ and a domain of size $[0, \lambda] \times [-1.5\lambda, 1.5\lambda]$, the initial amplitude is given by $\waveamplitude_0 = 10^{-2}$ and the wavelength is given by $\lambda = 2\pi$.

\subsubsection{Linear regime}
\begin{figure}
  \captionbox{The analytical dispersion relation~\cref{eqn:results:kh:dispersion} (solid lines) compared to the numerical approximation (markers) for $\ratio{\rho} = 1$.
  Here an interface resolution of $h = \lambda/32$ was used.
  \label{fig:results:kelvin_helmholtz_dispersion}}
  [\twofigwidth]{
    \def\tikzWidth{\textwidth*0.35}
    \def\tikzHeight{\textwidth*0.3}
    \inputtikzorpdf{kelvin_helmholtz_dispersion_Dr1}
  }\hfill
  \captionbox{Convergence of the interface amplitude for the Kelvin--Helmholtz problem at $x = 0$ in the nondimensionalised $L^2([0, T])$-norm.
  The density ratio was given by $\ratio{\rho} = 10^{-3}$.
  \label{fig:results:kelvin_helmholtz_convergence_dr1e-3}}
  [\twofigwidth]{
    \def\tikzWidth{\textwidth*0.325}
    \def\tikzHeight{\textwidth*0.3}
    \inputtikzorpdf{kelvin_helmholtz_convergence_Dr1e-03}
  }
\end{figure}
We start by considering an equal density ratio $\ratio{\rho} = 1$ and equal, up to sign, initial velocity fields $U^g = \jump{U}/2 = -U^l$, for several values of the Weber number.
This choice of density ratio combined with the velocity field implies that $\mean{\rho U} = 0$ such that $\aleph_+ + \aleph_- = 0$, which is the mode that corresponds to our initially piecewise constant velocity field.
It follows that the wave will not travel, and either oscillates or grows exponentially fast in amplitude.

In~\cref{fig:results:kelvin_helmholtz_dispersion} we show the dispersion relation given by~\cref{eqn:results:kh:dispersion} together with its numerical approximation $\approximate{\aleph}$, using $L = 2$ which results in $h = \lambda / 32$. 
The numerical frequency $\approximate\aleph$ is obtained from the approximate interface amplitude $\approximate{\delta}$, by optimising the following cost function
\begin{equation}
  f(\approximate\aleph) = \left\|\approximate{\waveamplitude(t)} - {\frac{e^{-i\approximate{\aleph}_+ t} + e^{-i\approximate{\aleph}_- t}}{2}}\waveamplitude_0\right\|^2_{L^2([0,T])}.
\end{equation}
Here we let $T = T_\sigma$ if the interface is \emph{found} to be stable, and $T = \log(6) / \Im(\aleph_-)$ if it is not\footnote{This results in $|\waveamplitude(T)| \le \waveamplitude_0 \cosh(\Im \aleph T) \approx 3\waveamplitude_0$ which ensures we are still in the linear regime.}.
Excellent agreement is observed between the numerical and analytical dispersion relations.

In practice our density ratio will not be one, and therefore we now consider a more realistic density ratio of $\ratio{\rho} = 10^{-3}$.
Using the same initial velocity fields we now find $\mean{\rho U} \neq 0$ such that the wave will travel with a non-zero velocity through the periodic domain.
We consider three values of the Weber number, given by $\weber \in \{0.05, 0.5, 5\}$, for which $\weber = 5$ results, both analytically and numerically, in an unstable interface, see~\cref{eqn:results:kh:dispersion}.
The accuracy of the amplitude as function of time is measured by the nondimensionalised $L^2([0,T])$-norm.
Results are shown in~\cref{fig:results:kelvin_helmholtz_convergence_dr1e-3}, which shows that for each of the Weber numbers convergence is obtained under mesh refinement.
We find convergence of order $\mathcal{O}(h^{1.5})$ for all Weber numbers, which shows that the first-order time integration error dominates as $\dt \propto h^{1.5}$, see also~\cref{eqn:overview:st_timestep}.

\subsubsection{Nonlinear regime}
\begin{figure}
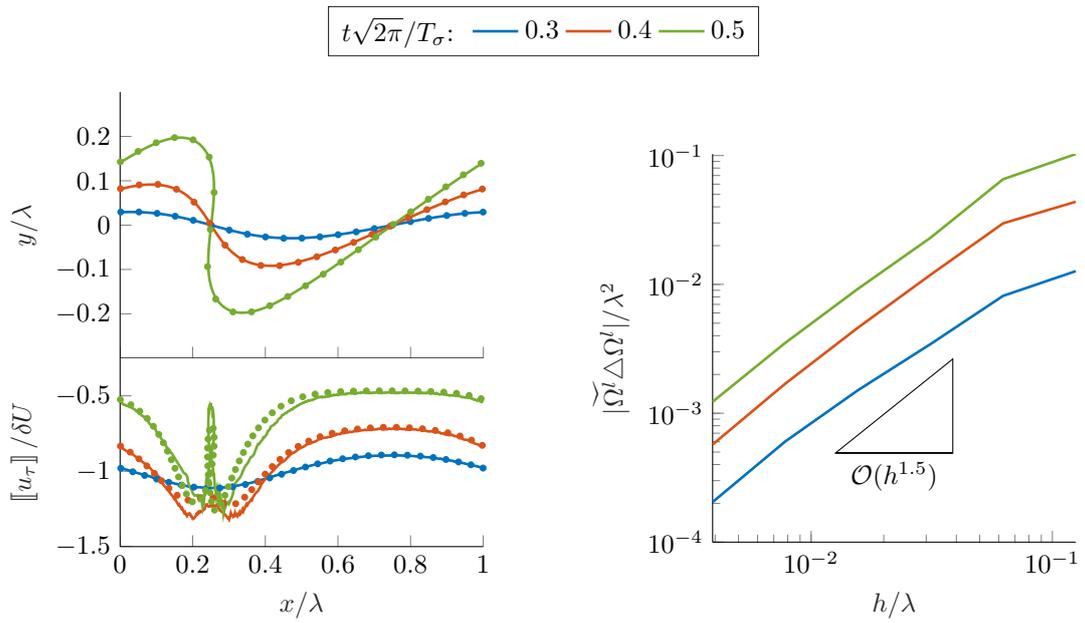

  \centering
  \inputtikzorpdf{kelvin_helmholtz_nonlinear_legend}

  \subcaptionbox{Example interface profile (top) and vortex sheet strength $\jump{u_\tau}$ (bottom) using $h = \lambda/256$.
  The solid lines and markers correspond to the approximate and BIM solution respectively.
  \label{fig:results:kh:nonlinear:example}}
  [\twofigwidth]{
    \def\tikzWidth{\textwidth*0.325}
    \def\tikzHeighttop{\textwidth*0.24}
    \def\tikzHeightbottom{\textwidth*0.17}
    \inputtikzorpdf{kelvin_helmholtz_nonlinear_stacked}
  }\hfill
  \subcaptionbox{Area of the symmetric difference of the approximate liquid domain compared to the BIM solution at three different time instances.
  \label{fig:results:kh:nonlinear:convergence}}
  [\twofigwidth]{
    \def\tikzWidth{\textwidth*0.325}
    \def\tikzHeight{\textwidth*0.35}
    \inputtikzorpdf{kelvin_helmholtz_nonlinear_profile_convergence}
  }
  \caption{Validation of the nonlinear regime for the Kelvin--Helmholtz problem with $\ratio{\rho} = 1$ and $\weber = \frac{5}{\pi}$.
  The BIM solution is obtained using Method III from~\citet{Baker1998}.}
\end{figure}
When the interface becomes unstable the {linear} analysis does not predict evolution of the interface beyond the stage where the interface amplitude $\waveamplitude(t)$ becomes much larger than the initial amplitude $\waveamplitude_0$.
To this end we have implemented the boundary integral method (BIM) from~\citet{Baker1998} (therein referred to as Method III), which we use for validation of the nonlinear evolution of the Kelvin--Helmholtz instability.
We let $\ratio{\rho} = 1$ and $\weber = \frac{5}{\pi}$.
In~\cref{fig:results:kh:nonlinear:example} we show the interface profile at three time instances, together with the vortex sheet strength (the velocity discontinuity $\jump{u_\tau}$).
The markers correspond to a high-resolution BIM solution. 
We find excellent agreement in both the interface profile as well as the vortex sheet strength.
Furthermore, in~\cref{fig:results:kh:nonlinear:convergence}, we have quantified the convergence of the approximate interface profile to the BIM solution by computing the area of the symmetric difference\footnote{The symmetric difference between two sets $A,B \subset \mathbb{R}^d$ is defined as\begin{equation}
  A \symmdiff B \defeq (A \cup B) \setminus (A \cap B),
\end{equation}
and the area of the symmetric difference is a measure for how close the two sets are.} between the two solutions.
Convergence of order $\mathcal{O}(h^{1.5})$ is observed, which coincides with the previously observed accuracy in the linear regime.

In~\cref{fig:results:kh:nl_pressure} we illustrate the clearly discontinuous pressure field on two time instances.
\begin{figure}
  \captionbox{The pressure field resulting from the Kelvin--Helmholtz problem with $\weber = \frac{5}{\pi}$ and $\ratio{\rho} = 1$, at $t\sqrt{2\pi}/T_\sigma = 0.4$ and $0.5$ respectively.\label{fig:results:kh:nl_pressure}}
  [\twofigwidth]{
    \scalebox{-1}[1]{\includegraphics[scale=.22,trim=0 80 0 80 0,clip=true]{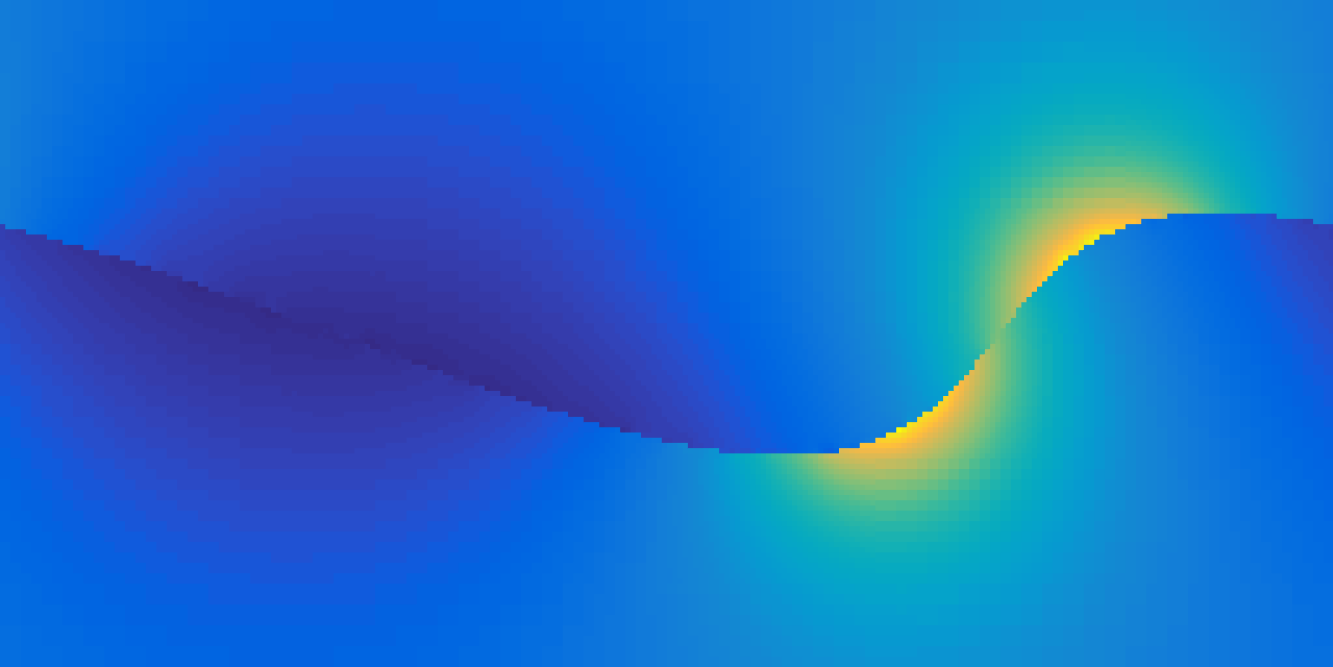}}
    \scalebox{-1}[1]{\includegraphics[scale=.22]{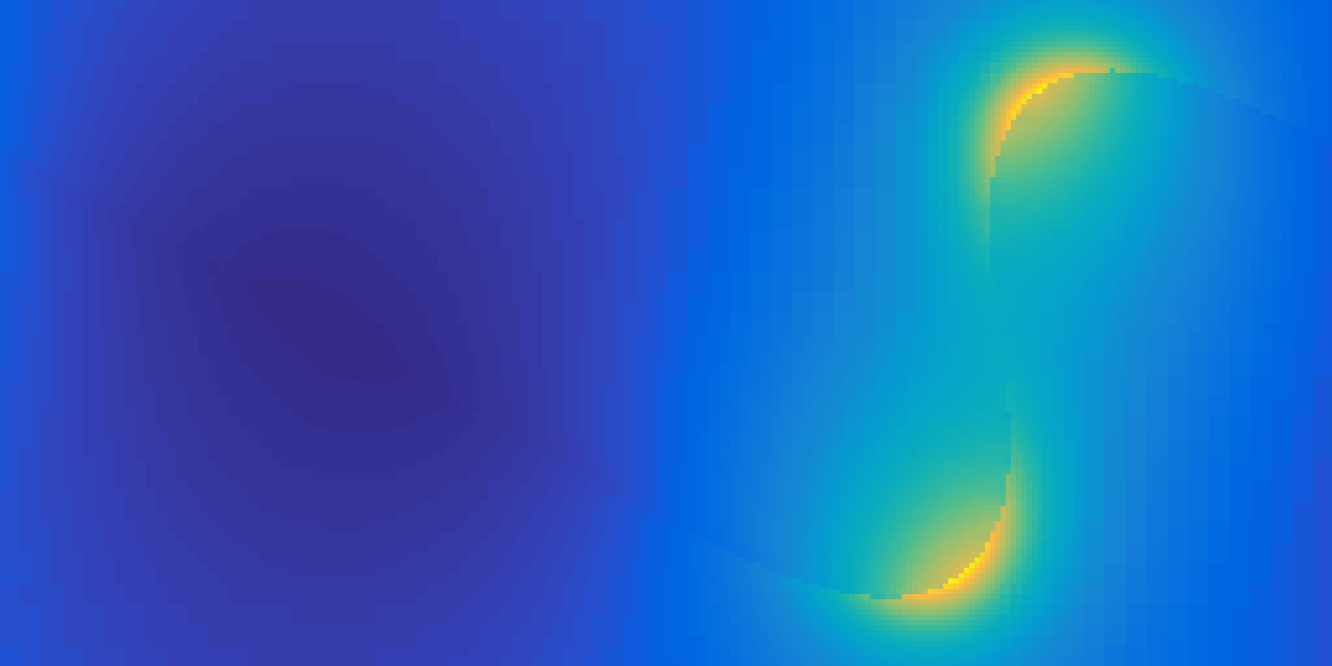}}
  }\hfill
  \captionbox{The analytical dispersion relation~\cref{eqn:results:rt:dispersion} (solid lines) compared to the numerical \twofluid approximation (markers) for $\ratio{\rho} = \{\frac{8}{9}, \frac{1}{3}, 10^{-3}\}$, corresponding to the large, medium and small size markers respectively (which all overlap).
  Here an interface resolution of $h = \lambda/32$ was used.\label{fig:results:rt:dispersion}}
  [\twofigwidth]{
    \def\tikzWidth{\textwidth*0.35}
    \def\tikzHeight{\textwidth*0.3}
    \inputtikzorpdf{rayleigh_taylor_dispersion_N32}
  }
\end{figure}

\subsection{Rayleigh--Taylor instability}\label{sec:results:rt}
We now consider the effect which gravity has on the stability of the interface.
Again, we consider the motion of the interface between two inviscid fluids, where the interface is initially as shown in~\cref{fig:results:standing_wave}, but now with the heavier liquid on top. 
The dispersion relation for the interface motion in the presence of gravity and surface tension is given by~\citep[Art. 267]{Lamb1932} (cf.~\cref{eqn:results:kh:dispersion})
\begin{equation}\label{eqn:results:rt:dispersion}
  \aleph_\pm = \pm \frac{2\pi}{T_\sigma} \sqrt{1 - \bond},
\end{equation}
where the Bond number is given by
\begin{equation}
  \bond = \roundpar{\frac{T_\sigma}{T_g}}^2 = \frac{(\rho^l - \rho^g)\abs{g}\lambda^2}{4\pi^2\sigma}.
\end{equation}
Hence the gravitational time scale is given by
\begin{equation}\label{eqn:results:rt:gravity_timescale}
  T_g = \sqrt{\frac{\rho^l + \rho^g}{\rho^l - \rho^g}\frac{2\pi\lambda}{\abs{g}}}.
\end{equation}
We find that, according to~\cref{eqn:results:rt:dispersion}, the interface is unstable w.r.t. an infinitesimal perturbation if $\bond > 1$.

\subsubsection{Linear regime}
We validate the dispersion relation given by~\cref{eqn:results:rt:dispersion} for several density ratios given by $\ratio{\rho} \in \{\frac{8}{9}, \frac{1}{3}, 10^{-3}\}$.
The approach we take is identical to that presented in~\cref{sec:results:kh} for obtaining~\cref{fig:results:kelvin_helmholtz_dispersion}.
Results are shown in~\cref{fig:results:rt:dispersion} which shows excellent agreement for all density ratios.

\subsubsection{Nonlinear regime}
\begin{figure}
  \centering
  \inputtikzorpdf{rt_nonlin_legend}

  \def\rtfigwidth{0.1375\textwidth}
  \def\tikzWidth{\threefigwidth}
  \def\tikzHeighttop{\threefigwidth} 
  \def\tikzHeightbottom{0.67*\threefigwidth}
  \definecolor{thecolor}{rgb}{1.0,1.0,1.0}
  \definecolor{theothercolor}{rgb}{0.00000,0.44700,0.74100}
  \definecolor{mycolor1}{rgb}{0.00000,0.44700,0.74100}%
  \definecolor{mycolor2}{rgb}{0.85000,0.32500,0.09800}%
  \definecolor{mycolor3}{rgb}{0.46600,0.67400,0.18800}%
  \def\thelinewidth{1.}
  \def\themarksize{0.8pt}
  \subcaptionbox{$\ratio{\rho} = \frac{8}{9}$.}
  [0.33333\textwidth]{ 
    \inputtikzorpdf{rayleigh_taylor_nonlinear_stacked_a1} 
  }\hfill
  \subcaptionbox{$\ratio{\rho} = \frac{1}{3}$.}
  [0.33333\textwidth]{
    \inputtikzorpdf{rayleigh_taylor_nonlinear_stacked_a2}
  }\hfill
  \subcaptionbox{$\ratio{\rho} = 10^{-3}$.}
  [0.33333\textwidth]{
    \inputtikzorpdf{rayleigh_taylor_nonlinear_stacked_a3}
  }
  \caption{The velocity magnitude at the final time (top), interface profile (middle) and vortex sheet strength $\jump{u_\tau}$ (bottom) for the Rayleigh--Taylor problem for different density ratios.
  The left- and right-hand side results correspond to the one- and \twofluid model respectively (using $\volfrac\amr{4}$-AMR resulting in $h / \lambda = 1/256$), whereas the markers (for the interface profile only) correspond to the reference solutions found in~\citet{Pullin1982}.
  The velocity magnitude at the third and final time instance is clipped to $\abs{\+u}_2 \in [0, 0.5]$, $[0, 1.25]$ and $[0, 1.5]$ for the three density ratios respectively.}
  \label{fig:cd:validation:rt:nonlinear:convergence}
\end{figure}
For the nonlinear regime we compare to numerical results from~\citet{Pullin1982} which were obtained using a BIM. 
Therein $\bond = 3$ and three density ratios $\ratio{\rho} \in \{\frac{8}{9}, \frac{1}{3}, 10^{-3}\}$ were considered.
We use $\volfrac$-AMR with a domain of size $[0, 2\pi] \times [-3 \pi, 3\pi]$ which is sufficiently large for finite-depth effects to be negligible.
The initial amplitude is taken as $\waveamplitude_0 = 10^{-2}$.
The velocity fields are initially zero, which corresponds to the initial velocity field resulting from the sum of the two modes $\aleph_\pm$.

The results are shown in~\cref{fig:cd:validation:rt:nonlinear:convergence} where we compare the approximate interface profile (solid lines) to the BIM solution (markers) at three time instances, for each of the three density ratios. 
We consider the one- and \twofluid solutions in the left- and right-hand side of each of the figures respectively.

For the \twofluid model we find excellent agreement with the reference solution, for all density ratios.
The \onefluid model yields a less accurate interface profile, especially for the density ratio closest to one.
We also show the vortex sheet strength $\jump{u_\tau}$ for the \twofluid solution; unfortunately no reference data was available.

The velocity magnitude at the final time instance is included for each of the density ratios.
This clearly shows the discontinuity in the velocity that has developed in the velocity field of the \twofluid solution.
For the \onefluid model no such discontinuity can develop, and therefore the velocity field shows a transition region where unphysical oscillations are present, in particular for the large density ratio solution. 

\subsection{Plateau--Rayleigh instability}\label{sec:results:pr}
\begin{figure}
  \captionbox{Example interface contours resulting from a Plateau--Rayleigh instability, for $\laplacenr = 23.834$ using $\kappa\amr{6}$-AMR with the \twofluid model. 
  The time corresponds to $t/T_\sigma \approx 14.72, 17.91, 19.02$ from left to right respectively.
  \label{fig:results:plateau_rayleigh:example_global}}
  [\textwidth]{
    \centering
    \import{inkscape/}{pr_overview.pdf_tex}
  }
\end{figure}
A liquid jet (a cylindrical column of water), in the absence of gravity, can become unstable due to surface tension and evolve into a sequence of droplets, as shown in~\cref{fig:results:plateau_rayleigh:example_global}.
Such a Plateau--Rayleigh instability happens when the resulting surface area of the droplets is smaller than that of the original liquid jet.

With the Plateau--Rayleigh instability we find that only a small part of the interface is greatly deformed (see~\cref{fig:results:plateau_rayleigh:example_global}), suggesting a localised mesh refinement criterion based on the interface curvature.
The refinement criterion $\kappa\amr{\level}$-AMR is such that the following condition is ensured to hold
\begin{equation}
  \varepsilon_\mtext{coarsen}^{\kappa} \le \max_{c \in \mathcal{B}} h_c |\kappa_c| \le \varepsilon_\mtext{refine}^{\kappa},
\end{equation}
where $\varepsilon_\mtext{coarsen}^{\kappa}$ and $\varepsilon_\mtext{refine}^{\kappa}$ are nondimensional tolerances, and $h_c = |c|^{1/d}$.
This means that if $h_c |\kappa_c| > \varepsilon_\mtext{refine}^{\kappa}$ for at least one $c \in \mathcal{B}$ then the block is refined, and if $h_c |\kappa_c| < \varepsilon_\mtext{coarsen}^{\kappa}$ for all $c \in \mathcal{B}$ then the block is coarsened.
Coarsening will likely double the nondimensional curvature and therefore we must ensure that
\begin{equation}
  2\varepsilon_\mtext{coarsen}^{\kappa} < \varepsilon_\mtext{refine}^{\kappa},
\end{equation}
such that the coarsened block isn't directly eligible for refinement in the next time step.
We use $\varepsilon_\mtext{refine}^{\kappa} = 0.2$ and $\varepsilon_\mtext{coarsen}^{\kappa} = 0.06$.

This test case was also considered in~\citet{Popinet2009}.
Therein the interface along the axis of the cylinder, the $x$-axis, is perturbed according to
\begin{equation}
  r(t, x) = R(1 + \epsilon e^{i(\aleph t + kx)}),
\end{equation}
where $r(x, t)$ denotes the radius of the jet and $k$ denotes the wavenumber of the perturbation.
The dispersion relation for perturbations on a liquid jet, for an inviscid fluid without the presence of a surrounding gas, is given by~\citep{rayleigh1892xvi}
\begin{equation}\label{eqn:st:validation:pr:inviscid}
  (\aleph T_\sigma)^2 + kR\frac{I_1(kR)}{I_0(kR)}\roundpar{1 - (kR)^2} = 0,
\end{equation}
where $I_n$ are the (non-negative) modified Bessel functions of the first kind.
It follows that the jet is stable w.r.t. perturbations of wavenumber $k$ provided that
\begin{equation}
  kR \ge 1 \quad \iff \quad \lambda \le 2\pi R.
\end{equation}
That is, the inviscid jet is stable if the wavelength of the perturbation is at most the circumference of the jet.
For finite values of the Laplace number (defined as in~\cref{eqn:results:laplaceR}) the following approximate generalisation of~\cref{eqn:st:validation:pr:inviscid} was proposed in~\citet{Eggers2008}
\begin{equation}\label{eqn:st:validation:pr:viscous}
  (\aleph T_\sigma)^2 + \half (kR)^2 \roundpar{1 - (kR)^2} + (\aleph T_\sigma) i(kR)^2 {\laplacenr}^{-\half} = 0.
\end{equation}

We use the \twofluid model with the fluid properties as given in~\citep{Popinet2009}. 
Importantly, the Laplace number is given by $\laplacenr = 23.834$ and $kR = 0.2\pi$ such that the liquid jet should become unstable.
We nondimensionalise time using the capillary timescale, which is given by
\begin{equation}
  T_\sigma = \sqrt{\frac{\rho^l R^3}{\sigma}}.
\end{equation}
The temporal evolution of the interface is shown in~\cref{fig:results:plateau_rayleigh:example_global}.
Indeed we find that the liquid jet becomes unstable, and eventually results in the pinch-off of a sequence of droplets.
We find that the $\kappa$-AMR refinement criterion successfully refines the interface where needed, resulting in an efficient refinement strategy.
Furthermore, the results illustrate that topological changes of the interface are no problem for the jump conditions discussed in~\cref{sec:poisson}.

\begin{figure} 
  \captionbox{The nondimensional amplitude of the perturbation as function of time for three Laplace numbers.
  We compare our \twofluid $\kappa(6)$-AMR solution (solid line) to theoretical estimates (markers).
  \label{fig:results:plateau_rayleigh:amplitude}}
  [0.4\textwidth]{
    \def\tikzWidth{\textwidth*0.28}
    \def\tikzHeight{\textwidth*0.3}
    \inputtikzorpdf{plateau_rayleigh_amplitude}
  }\hfill
  \captionbox{The neck thinness (the minimal neck thickness) as function of $\timpact = (t^\mtext{breakup} - t) / T_\sigma$.
  We compare our \twofluid $\kappa(6)$-AMR solution (solid line) to a numerical result from Gerris~\citep{Popinet2009} (dashed line) as well as theoretical slopes for the viscous ($r / R < 1 / \mtext{La}$) and capillary ($r / R > 1 / \mtext{La}$) dominating regime.
  \label{fig:results:plateau_rayleigh:neck_thinness}}
  [0.55\textwidth]{
    \def\tikzWidth{\textwidth*0.4}
    \def\tikzHeight{\textwidth*0.3}
    \inputtikzorpdf{plateau_rayleigh_neck_thinness} 
  }
\end{figure} 
The accuracy of the solution is quantified by comparing the growth of the perturbation amplitude to the theoretical dispersion relations~\cref{eqn:st:validation:pr:inviscid,eqn:st:validation:pr:viscous}. 
Results are shown in~\cref{fig:results:plateau_rayleigh:amplitude}, where excellent agreement is found for all three Laplace numbers.
Furthermore, in~\cref{fig:results:plateau_rayleigh:neck_thinness} we show the minimal radius of the jet as function of the time before breakup $\timpact$ for $\laplacenr = 23.834$. 
We also show a reference solution from~\citep{Popinet2009}, and theoretical slopes in the regions where capillarity ($r / R > 1 / \laplacenr$) and viscosity ($r / R < 1 / \laplacenr$) dominate~\citep{Popinet2009}.
For the pinch-off process we also find excellent agreement with the reference solution as well as the theoretical slopes.

\subsection{Third-order Stokes wave}\label{sec:results:stokes}
We consider the simulation of a third-order Stokes wave~\citep{Deike2015}, which, depending on the initial wave steepness $\epsilon$ and Bond number $\bond$, results in a non-breaking gravity wave, a parasitic-capillary wave, a spilling breaker, or a plunging breaker.
For details about the initial conditions and domain size we refer to~\citet{Deike2015}.
We consider $(\epsilon, \bond) = (0.55, 10^3)$ which yields a plunging breaker, as illustrated in~\cref{fig:results:stokes3:vorticity,fig:results:stokes3:3d} where we show example simulations using our proposed \twofluid model.

Contrary to~\cite{Deike2015} we do not initialise the velocity field in the gas phase, hence we let $u^g = 0$ at $t = 0$.
The density ratio is given by $\ratio{\rho} \approx 1.17 \times 10^{-3}$.
In the notation of~\cite{Deike2015}, we let the Reynolds and Bond number be given by $\reynolds=4 \times 10^4$ and $\bond=10^3$, resulting in `easy to resolve' shear layers ($\delta_\mtext{SL}(\lambda) = 0.1 \lambda$) and interface length scales.
Hence this type of wave is appropriate for testing whether our model will converge to the desired Navier--Stokes solution for a sufficiently fine mesh.
Here time has been nondimensionalised in terms of the wave period $T_g$ as given in~\cref{eqn:results:rt:gravity_timescale}.

Since this is a rather viscous problem, we aim at actually resolving the shear layer, thereby testing whether or not we have succeeded in solving~\cref{chal:refinement}: `how to ensure that the viscous one-velocity solution is obtained under mesh refinement?'
Resolving the shear layer implies that we must now also refine our mesh away from the interface, and to this end we use the vorticity based refinement criterion denoted by $\bar\omega\amr{\level}$-AMR.
The criterion is given by
\begin{equation}
  \varepsilon_\mtext{coarsen}^{\bar\omega} \le \max_{c \in \mathcal{B}} h_c |\bar\omega|_c \le \varepsilon_\mtext{refine}^{\bar\omega},
\end{equation}
where $2\varepsilon_\mtext{coarsen}^{\bar\omega} < \varepsilon_\mtext{refine}^{\bar\omega}$ are dimensional tolerances and $|\bar\omega| \in \mathcal{C}^h$ is an approximation of the vorticity magnitude
\begin{equation}
  |\bar\omega|_c \approx \abs{\curl \bar{\+u}(\+x_c)}_2.
\end{equation}
Note that the vorticity is computed using the equivalent velocity field of the \onefluid formulation as defined in~\cref{eqn:poisson:compare_to_one:onevelo}.

\begin{figure}
  \captionbox{Vorticity (clipped to $\omega_y \in [-100, 100]$) and interface profile on time instances $t / T_g \approx 0.38, 0.52, 0.66, 0.8$.
  The result was obtained using the \twofluid model and $\bar\omega\amr{6}\volfrac\amr{6}$-AMR.
  \label{fig:results:stokes3:vorticity}}
  [\twofigwidth]{
\begingroup%
  \makeatletter%
  \providecommand\color[2][]{%
    \errmessage{(Inkscape) Color is used for the text in Inkscape, but the package 'color.sty' is not loaded}%
    \renewcommand\color[2][]{}%
  }%
  \providecommand\transparent[1]{%
    \errmessage{(Inkscape) Transparency is used (non-zero) for the text in Inkscape, but the package 'transparent.sty' is not loaded}%
    \renewcommand\transparent[1]{}%
  }%
  \providecommand\rotatebox[2]{#2}%
  \newcommand*\fsize{\dimexpr\f@size pt\relax}%
  \newcommand*\lineheight[1]{\fontsize{\fsize}{#1\fsize}\selectfont}%
  \ifx\svgwidth\undefined%
    \setlength{\unitlength}{195.4578147bp}%
    \ifx\svgscale\undefined%
      \relax%
    \else%
      \setlength{\unitlength}{\unitlength * \real{\svgscale}}%
    \fi%
  \else%
    \setlength{\unitlength}{\svgwidth}%
  \fi%
  \global\let\svgwidth\undefined%
  \global\let\svgscale\undefined%
  \makeatother%
  \begin{picture}(1,1.02939871)%
    \lineheight{1}%
    \setlength\tabcolsep{0pt}%
    \put(0,0){\includegraphics[width=\unitlength,page=1]{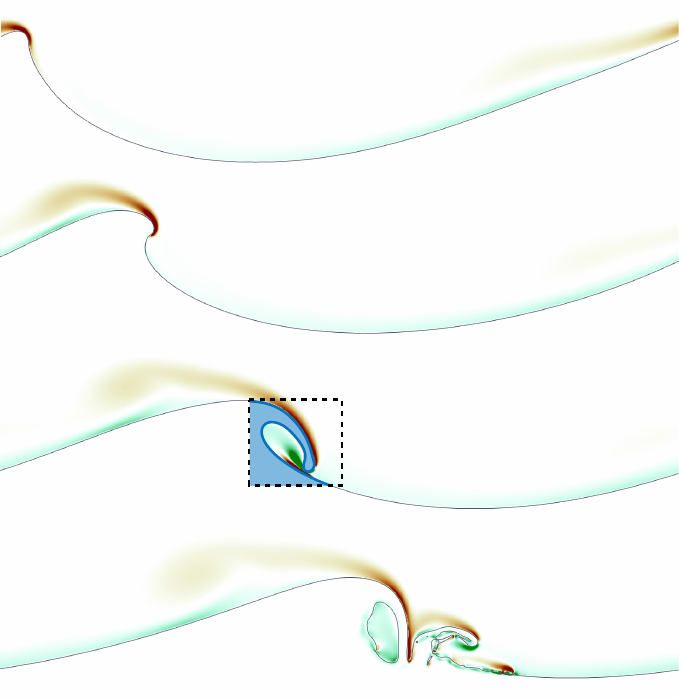}}%
    \put(0.32825654,0.25295794){\color[rgb]{0,0,0.09019608}\makebox(0,0)[lt]{\lineheight{1.25}\smash{\begin{tabular}[t]{l}\Cref{fig:results:stokes3:convergence}\end{tabular}}}}%
    \put(0,0){\includegraphics[width=\unitlength,page=2]{stokes3_vort.pdf}}%
    \put(0.78099551,0.55195338){\color[rgb]{0,0,0}\makebox(0,0)[lt]{\lineheight{1.25}\smash{\begin{tabular}[t]{l}$\+x_*$\end{tabular}}}}%
    \put(0,0){\includegraphics[width=\unitlength,page=3]{stokes3_vort.pdf}}%
    \put(0.33041038,0.70326458){\color[rgb]{0,0,0.09019608}\makebox(0,0)[lt]{\lineheight{1.25}\smash{\begin{tabular}[t]{l}\Cref{fig:results:stokes3:bl}\end{tabular}}}}%
    \put(0,0){\includegraphics[width=\unitlength,page=4]{stokes3_vort.pdf}}%
    \put(0.86142466,0.70433254){\color[rgb]{0.85098039,0.3254902,0.09803922}\makebox(0,0)[lt]{\lineheight{1.25}\smash{\begin{tabular}[t]{l}$w$\end{tabular}}}}%
  \end{picture}%
\endgroup%

  }\hfill
  \captionbox{Example 3D simulation using the \twofluid model with $\volfrac\amr{5}$-AMR on time instances $t / T_g \approx 0.5,  0.7,  0.9$. 
  The colouring corresponds to the magnitude of the liquid velocity clipped to the interval $\abs{\+u^l}_2 \in [0, 0.9]$.\label{fig:results:stokes3:3d}}
  [\twofigwidth]{
    \includegraphics[scale=0.103,trim=0 170 0 250,clip=true]{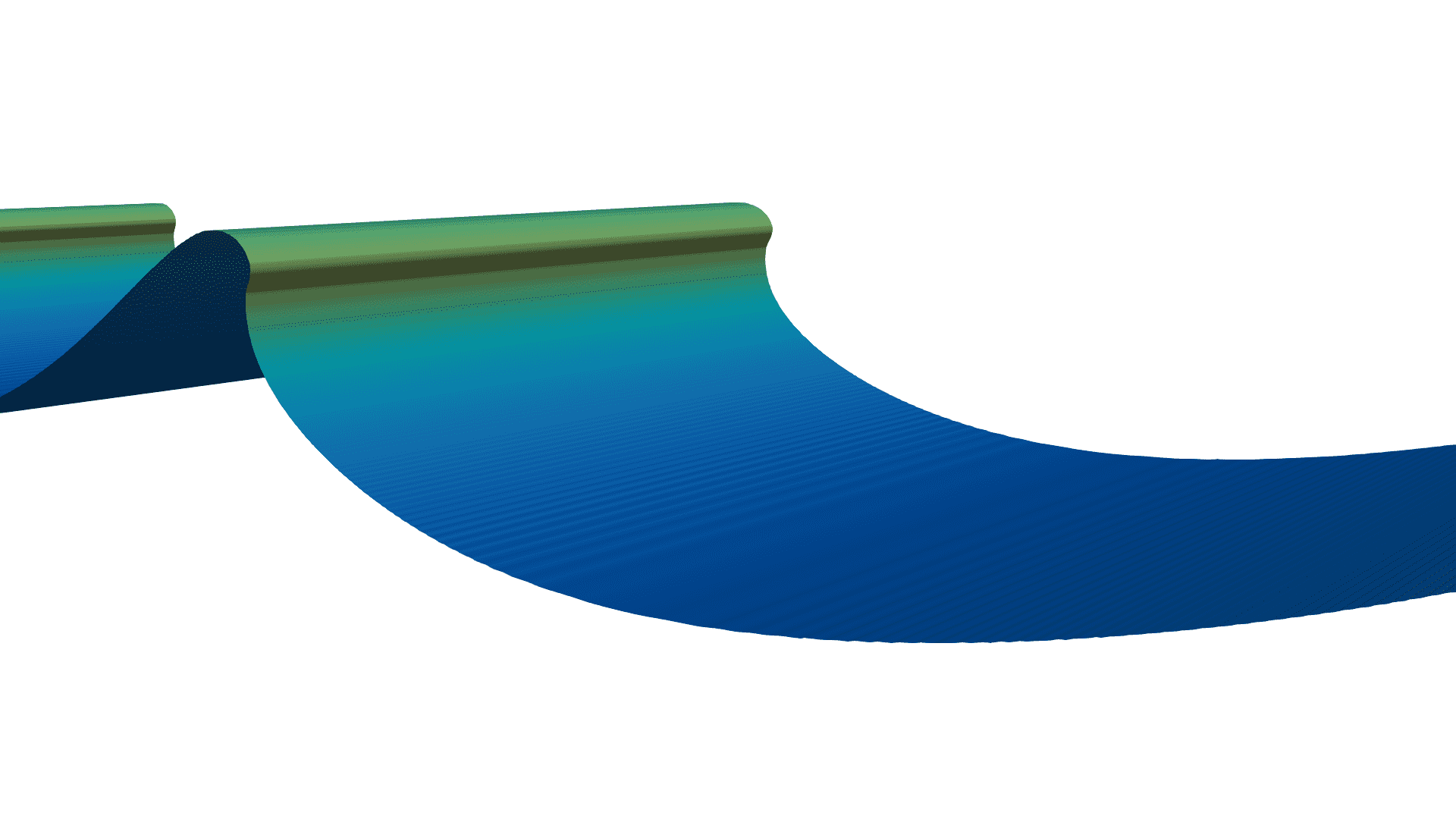}
    \includegraphics[scale=0.103,trim=0 170 0 250,clip=true]{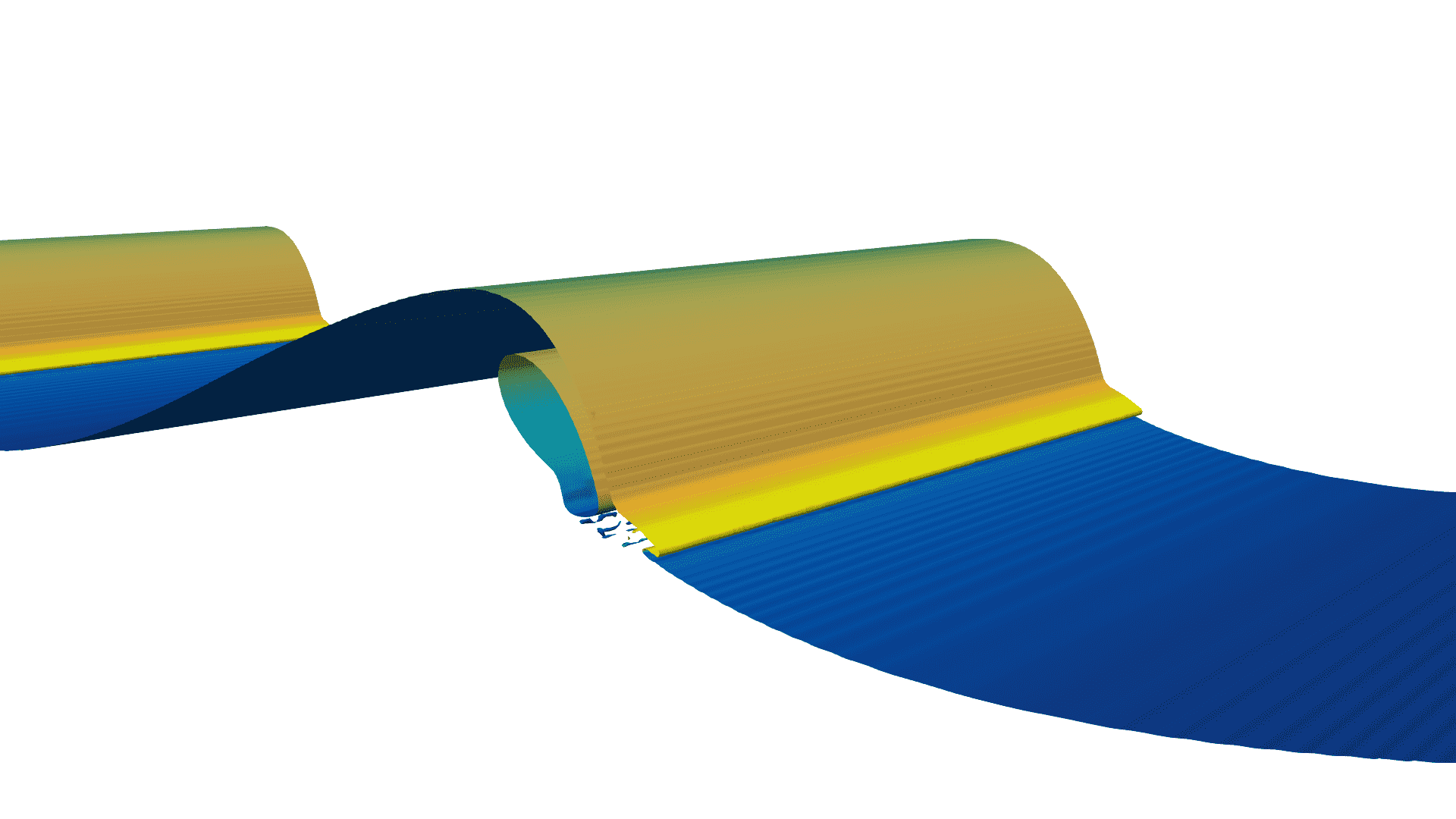}
    \includegraphics[scale=0.103,trim=0 170 0 250,clip=true]{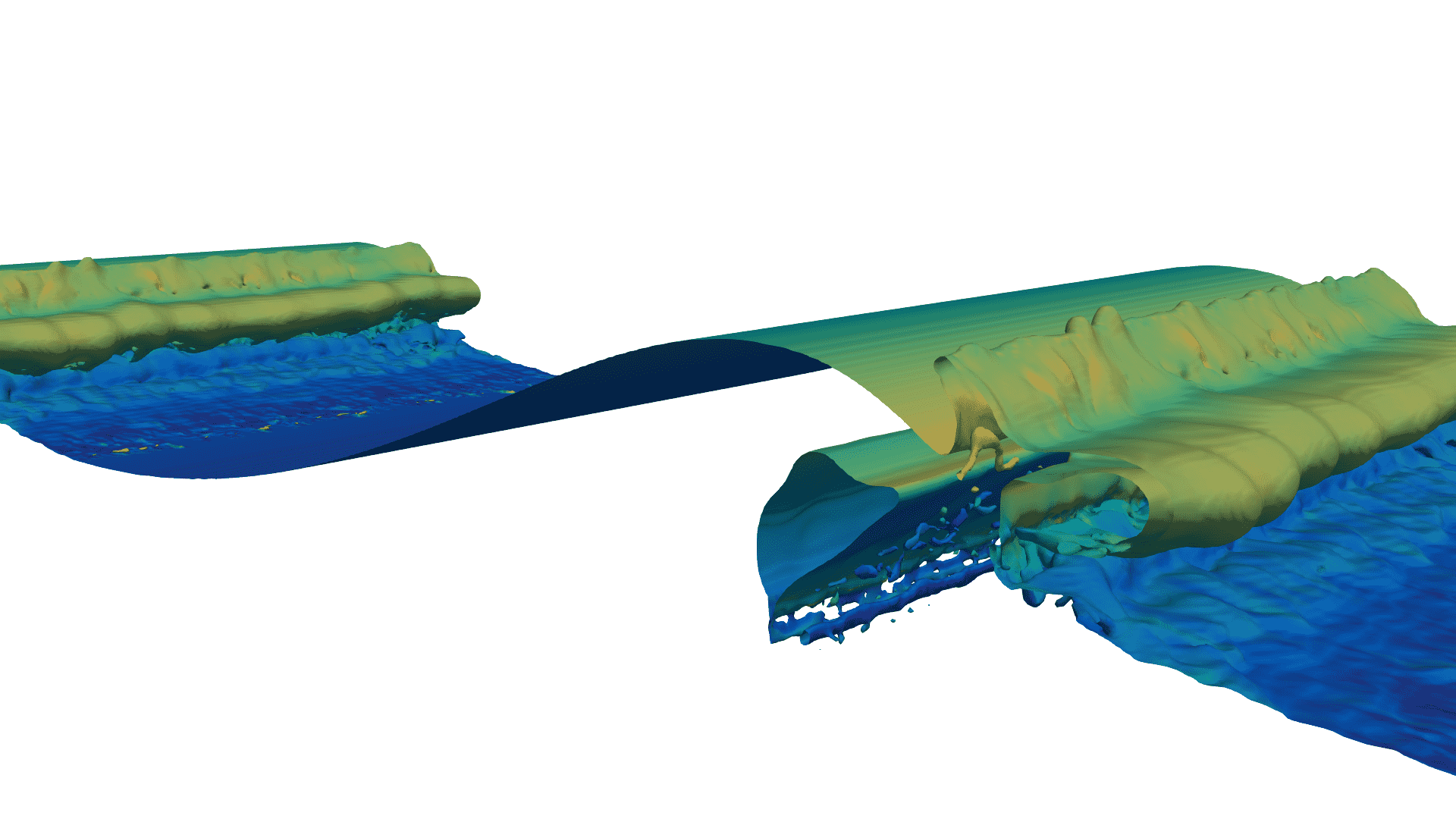}
  }

  \centering
  \inputtikzorpdf{stokes3_legend}

  \captionbox{Convergence of the interface profile at $t / T_g \approx 0.66$.
  The dashed and solid lines correspond to the one- and \twofluid model respectively.
  The black solid line is the reference solution, which is obtained using $L = 7$.
  \label{fig:results:stokes3:convergence}}
  [\twofigwidth]{
    \def\tikzWidth{\textwidth*0.35}
    \def\tikzHeight{\textwidth*0.35}
    \inputtikzorpdf{stokes3_profile_custom}
  }\hfill
  \captionbox{Convergence of interface tangential velocity $w$.
  See also~\cref{fig:results:stokes3:vorticity}.
  Here the blue, red and green lines correspond to $h / \delta_\mtext{SL}(\lambda) \approx 1/25, 1/50, 1/100$.
  The markers indicate the velocity per phase at the interface.\label{fig:results:stokes3:bl}}
  [\twofigwidth]{
    \def\tikzWidth{\textwidth*0.4}
    \def\tikzHeight{\textwidth*0.35}
    \inputtikzorpdf{stokes3_tangVelo1}
  }
\end{figure}
We use $\bar\omega\amr{\level}\volfrac\amr{\level}$-AMR with $\basemesh = 2^{-4}\lambda$, resulting in $h = 2^{-(4+\level)}\lambda$ for $\level \in \{4, \ldots, 7\}$.
The solution from the \twofluid model using $\bar\omega\amr{7}\volfrac\amr{7}$-AMR (resulting in $h / \delta_\mtext{SL}(\lambda) \approx 1/200$) is considered as our reference solution, since it essentially overlaps with the corresponding \onefluid solution (in terms of the interface profile as well as velocity profile inside the shear layer).
In~\cref{fig:results:stokes3:convergence} we show the convergence of the interface profile at $t / T_g \approx 0.66$.
We note that not much difference is observed in the interface profiles between the one- and \twofluid formulations.
This makes sense since the shear layer is resolved in this test case, which should indeed result in similar results from the one- and \twofluid models.

In~\cref{fig:results:stokes3:bl} we show the $z$-component of velocity along the line normal to the wave crest (see also the inset in~\cref{fig:results:stokes3:vorticity}, the position is given by $\+x_*/\lambda = [-0.264, 0.093]^T$), at $t/T_g\approx 0.52$, which shows how the velocity field converges under mesh refinement inside the shear layer.
We find that the velocity profiles for both the one- and \twofluid models converge to the same reference velocity profile.
The gas velocity away from the interface seems to converge to the reference solution faster when the \twofluid model is used, which can be expected to be a result of our proposed jump condition which ensures that the shear layer is not artificially thickened when it is not yet resolved.
Moreover we note that the velocity jump resulting from the \twofluid model, as indicated by the triangular markers, tends to zero under mesh refinement, as desired.

In~\cref{fig:results:stokes3:3d} we show an example 3D simulation of the same breaking wave, where we used the \twofluid model with $\volfrac\amr{5}$-AMR.
This illustrates the ability of the proposed \twofluid model to deal with significant topological changes, also in 3D, in the presence of the jump conditions discussed in~\cref{sec:poisson}.

\subsection{Breaking wave impact}\label{sec:results:impact}
\begin{figure}
  \centering
  \import{inkscape/}{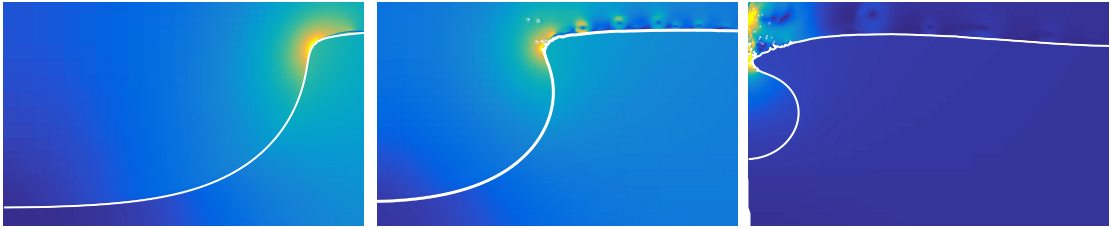}
  \caption{Example simulation using the proposed \twofluid model of the large gas-pocket impact from~\citet{Etienne2018} at scale $1 : 5$ (thus resulting in a $\lambda=4$m long wave) and $t /T_g \approx 0.44, 0.51, 0.58$ (from left to right). 
  We show the velocity magnitude, which was clipped to $0 \le \abs{\+u}_2 \le 7, 10, 70$ respectively.}
  \label{fig:results:lgpi:example}
\end{figure}
We finally turn to the example shown in~\cref{fig:results:lgpi:example}.
That is, we consider the simulation of a large gas-pocket impact (LGPI)~\citep{Etienne2018} at scale $s = 5$, resulting in the impact of a wave of length $\lambda = 4$m.
Here we are interested in resolving the most unstable wavelength $\lambda_\mtext{KH}$.
Based on the analysis in~\cref{sec:intro} we do not aim at entirely resolving the shear layer, since this would be too expensive.
We again use $T_g$ as our time scale, which was defined in~\cref{eqn:results:rt:gravity_timescale}.

We will use the~\twofluid model, where we refine the mesh at the interface in a region which moves along with the wave tip.
We refer to this refinement criterion as $\volfrac^{u}\amr{\level}$-AMR, which we will combine with $\volfrac\amr{4}$-AMR, resulting in $\volfrac^u\amr{\level}\volfrac\amr{4}$-AMR (resulting in four levels of refinement at the interface, and $L \ge 4$ levels at the wave tip).
A priori we choose our interface resolution to be sufficiently fine to resolve $\lambda_\mtext{KH}$ for $U_\tau \approx 15$m/s, resulting in $\lambda / \lambda_\mtext{KH} \approx 1307$.
We will use a uniform underlying mesh resulting in $\lambda / \basemesh = 80$, where $\basemesh$ denotes the mesh width of the underlying uniform mesh.
Hence we need $\level = 3+\log_2 \frac{1307}{80} \approx 7$ levels of refinement to have eight gridpoints per unstable wavelength.
Note that $\ceil{40/\sqrt{15}} = 2^4$ (see~\cref{eqn:intro:lengthratio}) and therefore resolving the shear layer with eight gridpoints per shear layer thickness requires at least four additional refinement levels, which would result in $L = 11$.

\begin{figure}
  \captionbox{Approximate solution to the LGPI problem at scale $s = 5$, using $\volfrac^u\amr{7}\volfrac\amr{4}$-AMR.
  The vorticity is shown as well as the interface profile, at $t/T_g \approx 0.45, 0.58$.
  Here we show part of the computational domain $x \in [0, \lambda/3]$.
  The insets show the velocity magnitude, clipped to a maximum of $7$m/s and $70$m/s respectively.\label{fig:results:lgpi:evolution}}
  [\twofigwidth]{
\begingroup%
  \makeatletter%
  \providecommand\color[2][]{%
    \errmessage{(Inkscape) Color is used for the text in Inkscape, but the package 'color.sty' is not loaded}%
    \renewcommand\color[2][]{}%
  }%
  \providecommand\transparent[1]{%
    \errmessage{(Inkscape) Transparency is used (non-zero) for the text in Inkscape, but the package 'transparent.sty' is not loaded}%
    \renewcommand\transparent[1]{}%
  }%
  \providecommand\rotatebox[2]{#2}%
  \newcommand*\fsize{\dimexpr\f@size pt\relax}%
  \newcommand*\lineheight[1]{\fontsize{\fsize}{#1\fsize}\selectfont}%
  \ifx\svgwidth\undefined%
    \setlength{\unitlength}{197.82690203bp}%
    \ifx\svgscale\undefined%
      \relax%
    \else%
      \setlength{\unitlength}{\unitlength * \real{\svgscale}}%
    \fi%
  \else%
    \setlength{\unitlength}{\svgwidth}%
  \fi%
  \global\let\svgwidth\undefined%
  \global\let\svgscale\undefined%
  \makeatother%
  \begin{picture}(1,1.1647459)%
    \lineheight{1}%
    \setlength\tabcolsep{0pt}%
    \put(0,0){\includegraphics[width=\unitlength,page=1]{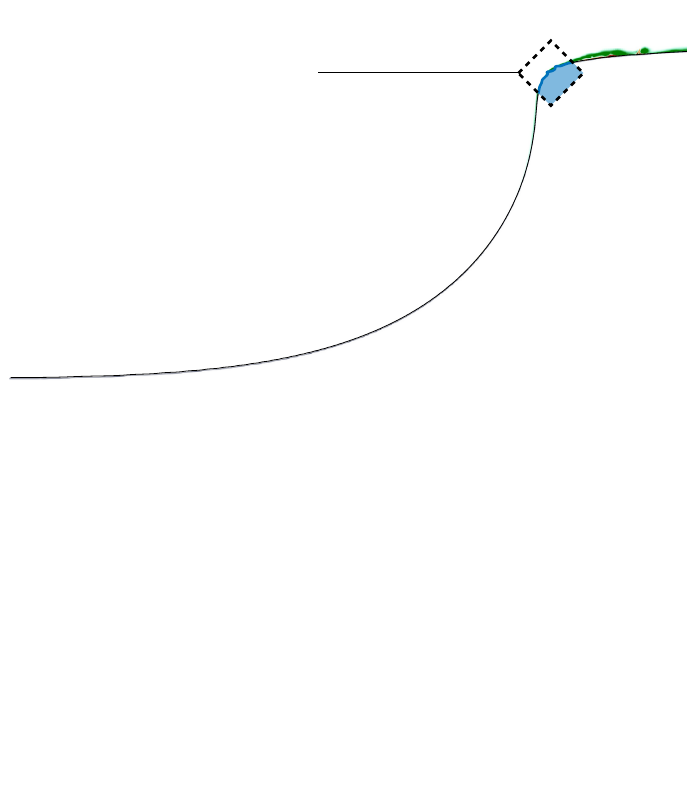}}%
    \put(0.48992335,1.07061313){\color[rgb]{0,0,0.09019608}\makebox(0,0)[lt]{\lineheight{1.25}\smash{\begin{tabular}[t]{l}\Cref{fig:results:lgpi:refinement}\end{tabular}}}}%
    \put(0,0){\includegraphics[width=\unitlength,page=2]{lgpi_global_vort.pdf}}%
    \put(0.24570275,0.8236015){\color[rgb]{0,0,0}\makebox(0,0)[lt]{\lineheight{1.25}\smash{\begin{tabular}[t]{l}$\lambda_\mtext{KH}$\end{tabular}}}}%
    \put(0,0){\includegraphics[width=\unitlength,page=3]{lgpi_global_vort.pdf}}%
  \end{picture}%
\endgroup%

  }\hfill
  \captionbox{Interface profile (top) and velocity jump (bottom) at $t/T_g \approx 0.45$ in rotated and nondimensionalised coordinates (see~\cref{fig:results:lgpi:evolution}).
  The wavelength $\lambda_\mtext{KH}$ was determined using~\cref{eqn:intro:wavelength} with $U_\tau = 6$m/s.
  The mesh was refined according to $\volfrac^u\amr{\level}\volfrac\amr{4}$-AMR for $\level \in \{6, 7\}$.\label{fig:results:lgpi:refinement}}
  [\twofigwidth]{
    \def\tikzWidth{\textwidth*0.33}
    \def\tikzHeighttop{\textwidth*0.19}
    \def\tikzHeightbottom{\textwidth*0.25}
    \inputtikzorpdf{lgpi_zoom_stacked}
  }
\end{figure}
In~\cref{fig:results:lgpi:evolution} we show the interface profile at two time instances, together with the vorticity field (clipped to $\omega_y \in [-10^3,10^3]$ and $\omega_y \in [-10^4,10^4]$ respectively).
Furthermore we show a close up of the wave crest with the velocity magnitude.
A KH instability is seen to appear around $t / T_g \approx 0.45$ with a wavelength which agrees well with the indicated theoretical wavelength  according to~\cref{eqn:intro:wavelength}.
This formula depends on the velocity jump (therein referred to as $U_\tau$), for which we have used $U_\tau = 6$m/s based on the velocity jump as shown in~\cref{fig:results:lgpi:refinement}.
In~\cref{fig:results:lgpi:refinement} we furthermore compare two resolutions, $\level \in \{6, 7\}$, resulting in $\lambda_\mtext{KH} / h \approx 24$ and $49$ respectively, which results in a well-resolved instability at $t / T_g \approx 0.45$. 
This is reflected in the fact that the interface profiles essentially overlap.

The instability develops further as shown in the bottom of~\cref{fig:results:lgpi:evolution}. 
Note that the velocities at this point are as large as $70$m/s, resulting in unstable wavelengths of size $\lambda_\mtext{KH} / h \approx 0.36$ for $h = \basemesh/2^7$.
Hence at this point in time the solution is no longer resolved.
Rather than using a fixed level of refinement at the interface, in future work we will explore the possibility of using $\lambda_\mtext{KH} / h$ as a refinement criterion, where $\lambda_\mtext{KH}$ is determined using a local estimate of $U_\tau$, and may also include (possibly stabilising) effects due to gravity.

\subsection{Computational cost}
\begin{figure}
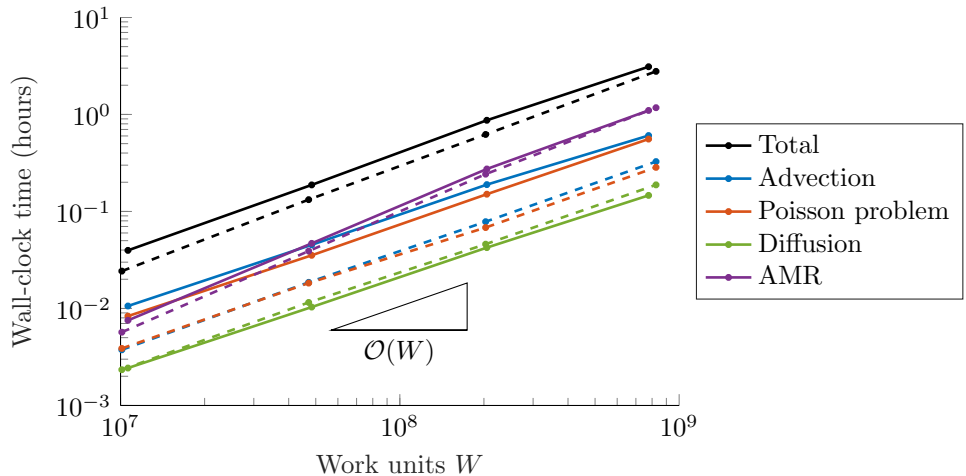

  \centering
  \def\tikzWidth{\textwidth*0.5}
  \def\tikzHeight{\textwidth*0.35}
  \inputtikzorpdf{stokes3_comptime}
  \caption{Wall-clock time (using OpenMP with 12 cores from an Intel Xeon E5-2680 v3 2.5 GHz processor) as function of the number of work units $W$ (sum over all time steps of the number of control volumes).
  Here advection refers to the advection of the interface as well as the transport of momentum, and includes the additional projection step required in the \twofluid model.
  Each marker corresponds to a simulation of the Stokes-3 problem from~\cref{sec:results:stokes} with refinement level $\level = 4, \ldots, 7$ for the \onefluid (dashed) and \twofluid (solid) model.\label{fig:results:cost}}
\end{figure}
For high-Reynolds number flow, with thin shear layers, the \twofluid model allows for the use of a coarser mesh, but the efficiency of the resulting method is determined also by its computational cost.
To this end we now compare the wall-clock times resulting from the eight Stokes-3 simulations performed in~\cref{sec:results:stokes} (four different levels of refinement for both the one- and \twofluid model).
In~\cref{fig:results:cost} the wall-clock time is shown as function of the number of `work units' $W$, which we define as the total number of control volumes considered during the simulation
\begin{equation}
  W = \sum_{n=0}^{N_t} |\mathcal{C}|^{(n)},
\end{equation}
where $N_t$ is the number of time steps and $|\mathcal{C}|^{(n)}$ is the number of control volumes at $t=t^{(n)}$.
As expected, the wall-clock time scales linearly with the number of work units. 
The total cost of using the \twofluid model is about $40\%$ larger when compared to the \onefluid model (for a given number of work units), which is acceptable considering the fact that many levels of mesh refinement can be omitted when using the \twofluid model, resulting in much fewer work units.

We note that the current implementation solves for all values of the jump $\xi \in \mathcal{F}^h$, which means that the computational cost may still be reduced by not solving for those jumps which are known to be zero (away from the interface).

  \section{Discussion}\label{sec:conclusion}
We have motivated the use of, and proposed a discretisation of, a sharp \twofluid model for two-phase flow.
The numerical model, which imposes continuity of only the interface normal component of velocity, is of particular interest in the simulation of high density ratio and convection dominated two-phase flow, such as the simulation of breaking wave impacts.
Such flow problems result in thin shear layers, where viscous effects dominate, and which are infeasible to resolve entirely.
In a previous paper~\citep{pubs_Remmerswaal2022a} we discussed the discretisation of the equations governing conservation of mass and momentum, whereas in this paper we considered the implicit coupling of the two fluids via the pressure Poisson equation as well as diffusion.

We have proposed a generalisation of the GFM, referred to as the MDGFM, which allows for the consistent imposition of the \emph{interface normal} component of the pressure gradient, rather than the \emph{face normal} component which is imposed in the GFM.
The MDGFM relies on the definition of a jump interpolant which yields the vector valued jump of our staggered velocity field.
Naive construction of such a jump interpolant results in unboundedness of the resulting interpolation coefficients, which we have resolved by a novel interpolation between two approximations of the face normal component of the jump.
The resulting gradient operator is combined with a cut-cell approximation of the divergence operator, resulting in the convergent approximation of the pressure Poisson problem, for a large range of density ratios.

Furthermore, we have introduced a simple model for diffusion in the presence of a velocity discontinuity.
In particular, the proposed model, which is implicit in time, is defined such that it coincides exactly with the diffusion model for the \onefluid formulation in case the velocity is continuous across the interface.
This approach aims at obtaining the \onefluid formulation when the viscous shear layer is resolved.

Also, a mimetic gravity model was proposed which is shown to be well-balanced, mimics the correct transfer between kinetic and gravitational potential energy, and moreover yields the correct contribution to linear momentum.


We have illustrated the efficacy of the proposed \twofluid formulation by directly comparing it to our \onefluid model.
It has been demonstrated that the well-balancedness of the model is hardly affected by introducing the new jump condition.
Moreover, we have illustrated convergence under mesh refinement of viscous as well as inviscid wave problems, in the presence of surface tension, inertial forces and/or gravity, in both the linear and nonlinear regime.
As can be expected, our \twofluid model performs better than the \onefluid model on inviscid problems, since the latter results in a numerical shear layer, which results in visible oscillations of the velocity, and in some cases results in a lack of convergence.

Topological changes are trivially dealt with, also in 3D, as is demonstrated by the simulation of the Plateau--Rayleigh instability as well as the 3D third-order Stokes wave impact.
The former test problem moreover illustrates the efficiency of the curvature based refinement criterion.

Finally we considered the simulation of breaking waves.
For the third-order Stokes wave we found that, due to the low Reynolds number, the shear layer was easy to resolve, resulting in very similar results when comparing the one- and \twofluid formulations.
We did find that the velocity profile inside the shear layer converges faster for the \twofluid model: a coarser mesh can be used with the \twofluid model.
The simulation of the large gas-pocket impact shows that the \twofluid formulation can effectively be used to model free surface instabilities on breaking waves, without the need for resolving the thin shear layer present at the interface.

In future work it would be interesting to further develop a subgrid model for diffusive stresses at the interface.
The \twofluid model will be used to study the effect of scaling on the development of free surface instabilities in 3D breaking wave impacts.
Therein we want to use an AMR criterion based on resolving the most unstable wavelength.

  \section*{Acknowledgements}
  This work is part of the research programme SLING, which is (partly) financed by the Netherlands Organisation for Scientific Research (NWO).
  We would like to thank the Center for Information Technology of the University of Groningen for their support and for providing access to the Peregrine high performance computing cluster.
  Moreover we thank Dr. Joaquín López (Universidad Politécnica de Cartagena) for kindly providing the VoFTools library.

  \appendix
  \setcounter{figure}{0}
  \section{The ghost fluid method}\label{sec:app:gfm}
We briefly summarise the derivation of the GFM as proposed by~\citet{Liu2000}.
Consider the example shown in~\cref{fig:poisson:gfm:distance_fun} with prescribed value and gradient jump, denoted by $\zeta_f$ and $\xi_f$ respectively (see~\cref{eqn:poisson:gfm:jumps}).
We denote by $\gfmgrad^\pi: \mathcal{C}^h \times \mathcal{F}^h \rightarrow \mathcal{F}^h$ the affine (linear up to a constant) function which approximates the scaled (by the density) pressure gradient
\begin{equation}
  \gfmgrad_f^\pi(p,\xi) \approx \frac{1}{\rho^\pi}\+n_f \cdot (\gradient p^\pi)(\+x_f).
\end{equation}
At the interface position $\stagger{\+x}^I_f$, which we approximate using the sharp interface reconstruction, we impose~\cref{eqn:poisson:gfm:jumps} at $\mathcal{O}(h_f)$ accuracy, resulting in
\begin{subequations}
  \begin{align}
    \roundpar{p_{c_2} - \stagger\phi^g_f h_f \rho^g \gfmgrad^g_f(p,\xi)} - \roundpar{p_{c_1} + \stagger\phi^l_f h_f \rho^l\gfmgrad^l_f(p,\xi)} &= \zeta_f\label{eqn:st:gfm:conditions_1}\\
    \gfmgrad^g_f(p,\xi) - \gfmgrad^l_f(p,\xi) &= \xi_f,\label{eqn:st:gfm:conditions_2}
  \end{align}
\end{subequations}
where~\cref{eqn:st:gfm:conditions_1}, denotes the pressure jump at $y = \stagger{y}^I_f$ in~\cref{fig:poisson:gfm:distance_fun}, and results from linear pressure extrapolation to the interface.

Solving~\cref{eqn:st:gfm:conditions_1,eqn:st:gfm:conditions_2} for $\gfmgrad^\pi_f(p,\xi)$ results in
\begin{equation}
  \gfmgrad^g_f(p,\xi) = \frac{1}{\mean{\stagger\phi_f\rho}} \squarepar{\frac{p_{c_2} - p_{c_1} - \zeta_f}{h_f} + \stagger\phi^l_f\rho^l \xi_f}, \quad \gfmgrad^l_f(p,\xi) = \frac{1}{\mean{\stagger\phi_f\rho}} \squarepar{\frac{p_{c_2} - p_{c_1} - \zeta_f}{h_f} - \stagger\phi^g_f\rho^g \xi_f},
\end{equation}
which can be generalised, by making use of the liquid indicator $\chi^l_{c} \in \{0, 1\}$ as well as the gradient operator as defined in~\cref{eqn:notation:gradient}, to~\cref{eqn:poisson:mdgfm:gfm,eqn:poisson:gfm:rtilde}.

\section{The mimetic gravity model}\label{sec:app:gravity}
\lemmagravitybalance*
\begin{proof}
  We will first show that the following equation holds
  \begin{equation}\label{eqn:app:gravity:reduced_equality}
    (\gradh(\chi^\pi q(\+x^I)))_f = a_f^\pi (\gradh q(\+x^I))_f.
  \end{equation}
  Recall that $\+x^I_c$ equals the interface centroid if $c$ contains part of the interface (and therefore $q(\+x^I_c) = 0$), and coincides with the control volume centroid $\+x_c$ if $c$ does not contain part of the interface (in which case $q(\+x^I_c) \neq 0$).
  Consider a face $f$ with $\mathcal{C}(f) = \{c_1, c_2\}$ and let $\pi$ be either $l$ or $g$, we find that one of the following conditions must hold.
  \begin{figure}
    \subcaptionbox{The case where $c_2$ is completely empty.}
    [\twofigwidth]{
      \centering
\begingroup%
  \makeatletter%
  \providecommand\color[2][]{%
    \errmessage{(Inkscape) Color is used for the text in Inkscape, but the package 'color.sty' is not loaded}%
    \renewcommand\color[2][]{}%
  }%
  \providecommand\transparent[1]{%
    \errmessage{(Inkscape) Transparency is used (non-zero) for the text in Inkscape, but the package 'transparent.sty' is not loaded}%
    \renewcommand\transparent[1]{}%
  }%
  \providecommand\rotatebox[2]{#2}%
  \newcommand*\fsize{\dimexpr\f@size pt\relax}%
  \newcommand*\lineheight[1]{\fontsize{\fsize}{#1\fsize}\selectfont}%
  \ifx\svgwidth\undefined%
    \setlength{\unitlength}{132.61362811bp}%
    \ifx\svgscale\undefined%
      \relax%
    \else%
      \setlength{\unitlength}{\unitlength * \real{\svgscale}}%
    \fi%
  \else%
    \setlength{\unitlength}{\svgwidth}%
  \fi%
  \global\let\svgwidth\undefined%
  \global\let\svgscale\undefined%
  \makeatother%
  \begin{picture}(1,0.5023933)%
    \lineheight{1}%
    \setlength\tabcolsep{0pt}%
    \put(0,0){\includegraphics[width=\unitlength,page=1]{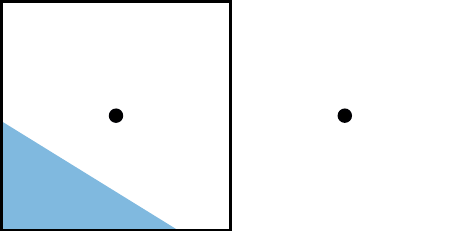}}%
    \put(0.26998437,0.29331206){\color[rgb]{0,0,0}\makebox(0,0)[lt]{\lineheight{1.25}\smash{\begin{tabular}[t]{l}$\+x_{c_1}$\end{tabular}}}}%
    \put(0,0){\includegraphics[width=\unitlength,page=2]{gravity_mgm_proof.pdf}}%
    \put(0.22486617,0.12561421){\color[rgb]{0,0,0}\makebox(0,0)[lt]{\lineheight{1.25}\smash{\begin{tabular}[t]{l}$\+x^I_{c_1}$\end{tabular}}}}%
    \put(0.51022404,0.23432007){\color[rgb]{0,0,0}\makebox(0,0)[lt]{\lineheight{1.25}\smash{\begin{tabular}[t]{l}$f$\end{tabular}}}}%
    \put(0.77696192,0.28927241){\color[rgb]{0,0,0}\makebox(0,0)[lt]{\lineheight{1.25}\smash{\begin{tabular}[t]{l}$\+x_{c_2}$\end{tabular}}}}%
  \end{picture}%
\endgroup%

    }\hfill
    \subcaptionbox{The case where $c_2$ is completely filled.}
    [\twofigwidth]{
      \centering
\begingroup%
  \makeatletter%
  \providecommand\color[2][]{%
    \errmessage{(Inkscape) Color is used for the text in Inkscape, but the package 'color.sty' is not loaded}%
    \renewcommand\color[2][]{}%
  }%
  \providecommand\transparent[1]{%
    \errmessage{(Inkscape) Transparency is used (non-zero) for the text in Inkscape, but the package 'transparent.sty' is not loaded}%
    \renewcommand\transparent[1]{}%
  }%
  \providecommand\rotatebox[2]{#2}%
  \newcommand*\fsize{\dimexpr\f@size pt\relax}%
  \newcommand*\lineheight[1]{\fontsize{\fsize}{#1\fsize}\selectfont}%
  \ifx\svgwidth\undefined%
    \setlength{\unitlength}{132.51707523bp}%
    \ifx\svgscale\undefined%
      \relax%
    \else%
      \setlength{\unitlength}{\unitlength * \real{\svgscale}}%
    \fi%
  \else%
    \setlength{\unitlength}{\svgwidth}%
  \fi%
  \global\let\svgwidth\undefined%
  \global\let\svgscale\undefined%
  \makeatother%
  \begin{picture}(1,0.50275935)%
    \lineheight{1}%
    \setlength\tabcolsep{0pt}%
    \put(0,0){\includegraphics[width=\unitlength,page=1]{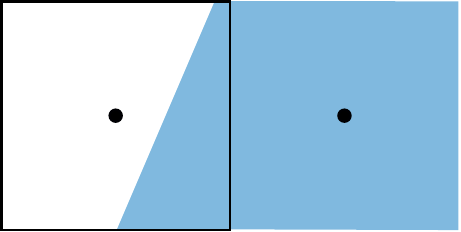}}%
    \put(0.19163226,0.29655743){\color[rgb]{0,0,0}\makebox(0,0)[lt]{\lineheight{1.25}\smash{\begin{tabular}[t]{l}$\+x_{c_1}$\end{tabular}}}}%
    \put(0,0){\includegraphics[width=\unitlength,page=2]{gravity_mgm_proof_filled.pdf}}%
    \put(0.37084562,0.17421691){\color[rgb]{0,0,0}\makebox(0,0)[lt]{\lineheight{1.25}\smash{\begin{tabular}[t]{l}$\+x^I_{c_1}$\end{tabular}}}}%
    \put(0.50986718,0.23449064){\color[rgb]{0,0,0}\makebox(0,0)[lt]{\lineheight{1.25}\smash{\begin{tabular}[t]{l}$f$\end{tabular}}}}%
    \put(0.77679942,0.2894826){\color[rgb]{0,0,0}\makebox(0,0)[lt]{\lineheight{1.25}\smash{\begin{tabular}[t]{l}$\+x_{c_2}$\end{tabular}}}}%
  \end{picture}%
\endgroup%

    }
    \caption{Illustration of the situation where exactly one of the control volumes ($c_1$) contains part of the interface.}\label{fig:app:gravity:proof}
  \end{figure}
  \begin{enumerate}
    \item Both control volumes contain part of the interface: $q(\+x^I_{c_1}) = 0 = q(\+x^I_{c_2})$, for this case we obviously have equality in~\cref{eqn:app:gravity:reduced_equality}.
    \item Neither of the control volumes contain part of the interface: $q(\+x^I_{c_1}) \neq 0 \neq q(\+x^I_{c_2})$, since for non-interface control volumes the `interface centroid' was defined as the control volume centroid instead.
    It follows that $a_f^\pi, \chi_{c_1}^\pi, \chi_{c_2}^\pi$ are either all equal to zero, or all equal to one, and therefore the equality in~\cref{eqn:app:gravity:reduced_equality} holds.
    \item Exactly one of the control volumes contains part of the interface: without loss of generality we assume that ${c_1}$ contains part of the interface, and $c_2$ does not. 
    Hence $q(\+x^I_{c_1}) = 0$ and $q(\+x^I_{c_2}) \neq 0$. 
    Then $a^\pi_f \in \{0, 1\}$, and since $c_2$ is entirely full or empty we find that $\chi^\pi_{c_2} = a^\pi_f$.
    See also~\cref{fig:app:gravity:proof}.
    It follows that
    \begin{align}
      -h_f \gradh(\chi^\pi q(\+x^I))_f 
        &\stackrelwidth{\eqref{eqn:notation:gradient}}{=} \orientation_{{c_1},f} \chi_{c_1}^\pi q(\+x^I_{c_1}) + \orientation_{c_2,f} \chi_{c_2}^\pi q(\+x^I_{c_2})\\
        &= a^\pi_f \orientation_{c_2,f} q(\+x^I_{c_2})\\
        &= a^\pi_f \squarepar{\orientation_{{c_1},f} q(\+x^I_{c_1}) + \orientation_{c_2,f} q(\+x^I_{c_2})}\\
        &\stackrelwidth{\eqref{eqn:notation:gradient}}{=} -h_fa_f^\pi \gradh( q(\+x^I))_f,\\
    \end{align}
    and therefore also in this case the equality given by~\cref{eqn:app:gravity:reduced_equality} holds.
  \end{enumerate}
  
  When multiplying~\cref{eqn:app:gravity:reduced_equality} by $\rho^\pi \abs{\+g}_2$ (which is constant and may therefore be moved into the gradient operator) and summing over the phases, we find that
  \begin{equation}\label{eqn:app:gravity:intermediate1}
    \gradh(\mean{\chi \rho} \abs{\+g}_2 q(\+x^I)) = \mean{a \rho} \gradh(\abs{\+g}_2 q(\+x^I)),
  \end{equation}
  where by definition of the level set function in~\cref{eqn:gravity:levelset} it follows that
  \begin{equation}\label{eqn:app:gravity:intermediate2}
    \gradh(\abs{\+g}_2 q(\+x^I)) = \gradh(\abs{\+g}_2 \+N \cdot \+x^I) - \gradh(\abs{\+g}_2S) = - \gradh(\+g \cdot \+x^I),
  \end{equation}
  where we made use of $\abs{\+g}_2\+N = -\+g$ and the fact that the gradient operator applied to a constant scalar vanishes: $\gradh 1 = 0$.
  Combining~\cref{eqn:app:gravity:intermediate1,eqn:app:gravity:intermediate2,eqn:gravity:model} then yields~\caref{eqn:app:gravity:steady_state}, where the steady state pressure was given by~\caref{eqn:app:gravity:mgm_sol}.
\end{proof}

The gravity force is nonconservative, but we can still check whether the discrete contribution to linear momentum mimics the analytical counterpart.
Analytically we find that the contribution to linear momentum due to gravity is given by
\begin{equation}\label{eqn:app:gravity:lin_mom_cont}
  \integral{\Omega}{\+F}{V} = \integral{\Omega}{\gradient\roundpar{\rho \+g \cdot \+x}}{V} = -\jump{\rho} \integral{I}{\+\eta (\+g \cdot \+x)}{S},
\end{equation}
which follows from the application of Gauss's divergence theorem on each of the phase domains $\Omega^{\pi}$.
Hence the contribution to linear momentum is some integral over the phase interface $I$.

Numerically we can approximate the interface area vector in the following way
\begin{equation}\label{eqn:st:apert:gauss}
  \+0 = \integral{c^l}{\gradient 1}{V} = \integral{\partial c^l}{\+\eta}{V},
\end{equation}
which follows from Gauss's divergence theorem.
Splitting the boundary integral on the right-hand side of~\cref{eqn:st:apert:gauss} into an integral over the interface $I_c$ and an integral over all but the interface $\partial c^l \setminus I_c$, results in the former being able to be expressed in terms of the latter
\begin{equation}\label{eqn:st:apert:apertnormal}
  |I_c|\apertnormal_c = -|c|\divh(a^l\+n)_c,
\end{equation}
which shows that the interface area vector $|I_c|\apertnormal_c$ can be expressed in terms of the same fluid apertures that are present in the gravity model.
Using~\cref{eqn:st:apert:apertnormal} allows us to express the change in linear momentum due to the MGM as
\begin{equation}\label{eqn:app:gravity:lin_mom_disc}
  \sum_\setextrusion{F} \+n F \stackrel{\eqref{eqn:gravity:model}}{=} \sum_\setextrusion{F} \+n \mean{a \rho} \gradh(\+g \cdot\+x^I) = \sum_\mathcal{C} \jump{\rho} \divh(\+n a^l) \+g \cdot\+x^I \stackrel{\eqref{eqn:st:apert:apertnormal}}{=} -\jump{\rho} \sum_{c \in \mathcal{C}} |I_c| \apertnormal_{c} (\+g \cdot\+x^I_c),
\end{equation}
where in the second equality we again made use of the adjointness relation between the divergence and gradient operators as given by~\cref{eqn:notation:adjointness}, and moreover made use of 
\begin{equation}
  \mean{a \rho} = a^l\rho^l + a^g\rho^g = a^l (\rho^l - \rho^g) + \rho^g = -\jump{\rho} a^l + \rho^g.
\end{equation}
Comparison of~\cref{eqn:app:gravity:lin_mom_cont,eqn:app:gravity:lin_mom_disc} shows that the contribution to linear momentum is correctly modelled by the MGM.

  \bibliography{library,pubs}
\end{document}